\numberwithin{equation}{section}
\numberwithin{figure}{section}
\theoremstyle{plain}
\newtheorem{thm}{\protect\theoremname}
  \theoremstyle{definition}
  \newtheorem{defn}[thm]{\protect\definitionname}
  \theoremstyle{plain}
  \newtheorem{lem}[thm]{\protect\lemmaname}
  \theoremstyle{remark}
  \newtheorem{rem}[thm]{\protect\remarkname}
  \theoremstyle{plain}
  \newtheorem{prop}[thm]{\protect\propositionname}
  \providecommand{\definitionname}{Definition}
  \providecommand{\lemmaname}{Lemma}
  \providecommand{\propositionname}{Proposition}
  \providecommand{\remarkname}{Remark}
\providecommand{\theoremname}{Theorem}
\begin{document}
\title[Deformations of $\mathbb{A}^1$-cylindrical varieties]{Deformations of $\mathbb{A}^1$-cylindrical varieties}

\author{Adrien Dubouloz}

\address{IMB UMR5584, CNRS, Univ. Bourgogne Franche-Comté, F-21000 Dijon,
France.}

\author{Takashi Kishimoto}

\email{adrien.dubouloz@u-bourgogne.fr}

\address{Department of Mathematics, Faculty of Science, Saitama University,
Saitama 338-8570, Japan}

\email{tkishimo@rimath.saitama-u.ac.jp}

\thanks{The second author was partially funded by Grant-in-Aid for Scientific
Research of JSPS No. 15K04805. The research was initiated during a
visit of the first author at the University of Saitama and continued
during a stay of the second author at the University of Burgundy as
a CNRS Research Fellow. The authors thank these institutions for their
generous support and the excellent working conditions offered. }

\subjclass[2000]{14R25; 14D06; 14M20; 14E30}

\keywords{$\mathbb{A}^{1}$-cylinder, deformation, Minimal Model Program, uniruled
varieties. }
\begin{abstract}
An algebraic variety is called $\mathbb{A}^{1}$-cylindrical if it
contains an $\mathbb{A}^{1}$-cylinder, i.e. a Zariski open subset
of the form $Z\times\mathbb{A}^{1}$ for some algebraic variety $Z$.
We show that the generic fiber of a family $f:X\rightarrow S$ of
normal $\mathbb{A}^{1}$-cylindrical varieties becomes $\mathbb{A}^{1}$-cylindrical
after a finite extension of the base. This generalizes the main result
of \cite{DK2} which established this property for families of smooth
$\mathbb{A}^{1}$-cylindrical affine surfaces. Our second result is
a criterion for existence of an $\mathbb{A}^{1}$-cylinder in $X$
which we derive from a careful inspection of a relative Minimal Model
Program ran from a suitable smooth relative projective model of $X$
over $S$. 
\end{abstract}

\maketitle

\section*{Introduction}

An algebraic variety is called $\mathbb{A}^{1}$-cylindrical (or affine-ruled
or $\mathbb{A}^{1}$-ruled) if it contains an $\mathbb{A}^{1}$-cylinder,
i.e. a Zariski open subset of the form $Z\times\mathbb{A}^{1}$ for
some algebraic variety $Z$. Such $\mathbb{A}^{1}$-cylinders appear
naturally in many recent problems and questions related to the geometry
of algebraic varieties, both affine and projective \cite{KMc99,DK1,DK2,DK3,DK4,CPW16,CPW16-2,CPW17,KPZ13,KPZ14,KPZ14-2,PZ14,PZ15}.
Clearly, there are only two $\mathbb{A}^{1}$-cylindrical smooth complex
curves: the affine line $\mathbb{A}^{1}$ and the projective line
$\mathbb{P}^{1}$. As a consequence of classical classification results,
every smooth projective surface of negative Kodaira dimension is $\mathbb{A}^{1}$-cylindrical,
and the same holds true for smooth affine surfaces by a deep result
of Miyanishi-Sugie and Fujita \cite{MS80}. But it is still an open
problem to find a complete and effective characterization of which
complex surfaces, possibly singular, contain $\mathbb{A}^{1}$-cylinders
\cite{KMc99}. The situation in higher dimension is even more elusive,
some natural class of examples of $\mathbb{A}^{1}$-cylindrical varieties
are known, especially in relation with the study of additive group
actions on affine varieties, but for instance the question whether
every smooth rational projective variety is $\mathbb{A}^{1}$-cylindrical
is still totally open. 

A natural way to try to produce new $\mathbb{A}^{1}$-cylindrical
varieties from known ones is to consider algebraic families $f:X\rightarrow S$
of such varieties. One hopes that the fiberwise $\mathbb{A}^{1}$-cylinders
could arrange themselves continuously to form a global relative $\mathbb{A}^{1}$-cylinder
in the total space $X$, in the form of a cylinder $U\simeq Z\times\mathbb{A}^{1}$
in $X$ for some $S$-variety $Z$, whose restriction to a general
closed fiber of $f:X\rightarrow S$ is equal to the initially prescribed
$\mathbb{A}^{1}$-cylinder in it. For families of relative dimension
one, it is a classical fact \cite{KM78} that a smooth fibration $f:X\rightarrow S$
whose general closed fibers are isomorphic to $\mathbb{A}^{1}$ indeed
restricts to trivial $\mathbb{A}^{1}$-bundle $Z\times\mathbb{A}^{1}$
over a dense open subset $Z$ of $S$. But on the other hand, the
existence of nontrivial conic bundles $f:X\rightarrow S$ shows that
it is in general too much to expect that fiberwise cylinders are restrictions
of global ones. Indeed, for such a nontrivial conic bundle, the general
closed fibers are isomorphic to $\mathbb{P}^{1}$, hence are $\mathbb{A}^{1}$-cylindrical,
but the generic fiber of $f:X\rightarrow S$ is a nontrivial form
of $\mathbb{P}^{1}$ over the function field $K$ of $S$: the latter
does not contains any open subset isomorphic to $\mathbb{A}_{K}^{1}$,
which prevents in turn the existence of a global $\mathbb{A}^{1}$-cylinder
in $X$ over an $S$-variety. Nevertheless, such an $\mathbb{A}^{1}$-cylinder
exists after extending the scalars to a suitable quadratic extension
of $K$, leading to the conclusion that the total space of any smooth
family $f:X\rightarrow S$ of $\mathbb{A}^{1}$-cylindrical varieties
of dimension one always contain a relative $\mathbb{A}^{1}$-cylinder,
possibly after an \'etale extension of the base $S$. 

A similar property is known to hold for certain families of relative
dimension $2$. More precisely, it was established in \cite[Theorem 3.8]{GMM14}
and \cite[Theorem7]{DK2} by different methods, involving respectively
the study of log-deformations of suitable relative projective models
and the geometry of smooth affine surfaces of negative Kodaira dimension
defined over non closed fields, that for smooth families $f:X\rightarrow S$
of complex $\mathbb{A}^{1}$-cylindrical affine surfaces, there exists
an \'etale morphism $T\rightarrow S$ such that $X_{T}=X\times_{S}T$
contains an $\mathbb{A}^{1}$-cylinder $U\simeq Z\times\mathbb{A}^{1}$
over a $T$-variety $Z$. The first main result of this article consists
of a generalization of this property to arbitrary families $f:X\rightarrow S$
of normal algebraic varieties defined over an uncountable base field,
namely: 
\begin{thm}
\label{thm:BaseChangeThm} Let $k$ be an uncountable field of characteristic
zero and let $f:X\rightarrow S$ be dominant morphism between geometrically
integral algebraic $k$-varieties. Suppose that for general closed
points $s\in S$, the fiber $X_{s}$ contains an $\mathbb{A}^{1}$-cylinder
$U_{s}\simeq Z_{s}\times\mathbb{A}^{1}$ over a $\kappa(s)$-variety
$Z_{s}$. Then there exists an \'etale morphism $T\rightarrow S$
such that $X_{T}=X\times_{S}T$ contains an $\mathbb{A}^{1}$-cylinder
$U\simeq Z\times\mathbb{A}^{1}$ over a $T$-variety $Z$. 
\end{thm}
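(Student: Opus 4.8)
The plan is to reduce the statement to a question about the generic fibre $X_{\eta}$ over the function field $K=k(S)$ and to prove that $X_{\eta}$ becomes $\mathbb{A}^{1}$-cylindrical after a finite field extension $K'/K$, which is automatically separable since $\mathrm{char}\,k=0$. Spreading such a cylinder out over a dense open subset of the normalization of $S$ in $K'$ then produces the desired \'etale morphism $T\to S$ together with a relative $\mathbb{A}^{1}$-cylinder in $X_{T}$. The difficulty is that a priori the cylinders $U_{s}\simeq Z_{s}\times\mathbb{A}^{1}$ supplied by the hypothesis could be of unbounded complexity as $s$ varies, so there is no obvious single algebraic family interpolating them; the uncountability of $k$ is precisely what will allow me to extract a bounded family.

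First I would fix a relative projective model, that is, an open immersion $X\hookrightarrow\bar{X}$ into a scheme $\bar{X}$ projective over a dense open subset of $S$ and embedded in some $\mathbb{P}^{M}_{S}$. For a good closed point $s$, the cylinder $U_{s}$ is the complement in $\bar{X}_{s}$ of a closed ``boundary'' $B_{s}$, and the product structure is recorded by an isomorphism $U_{s}\xrightarrow{\sim}Z_{s}\times\mathbb{A}^{1}$; compactifying $Z_{s}$ and taking the graph of this isomorphism, the entire datum of a cylinder of prescribed numerical type is encoded by a point of a suitable relative Hilbert/Hom scheme over $S$. These parameter schemes form a countable disjoint union $\bigsqcup_{\alpha}\mathcal{P}_{\alpha}$ of $S$-schemes of finite type, one for each choice of Hilbert polynomials of the subschemes involved, and inside each $\mathcal{P}_{\alpha}$ the locus $C_{\alpha}$ where the recorded morphism is genuinely an isomorphism onto an open subset of the fibre of the form $Z\times\mathbb{A}^{1}$ is constructible. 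Using \cite{KM78} over the normal bases that occur, I would check that the formation of $C_{\alpha}$ is compatible with extension of the residue field, so that a point of $C_{\alpha}$ with residue field $L$ lying over a point $s$ yields an honest $\mathbb{A}^{1}$-cylinder in $X_{s}\otimes_{\kappa(s)}L$.

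The heart of the argument is then a counting step. By hypothesis the good closed points are dense, so they contain all closed points of some dense open $S^{\circ}\subseteq S$, and by construction every good closed point lies in the image of at least one $C_{\alpha}$. Suppose, for contradiction, that no $C_{\alpha}\to S$ dominates $S$. By Chevalley's theorem each image is then contained in a proper closed subset $T_{\alpha}\subsetneq S$, whence the good closed points would lie in the countable union $\bigcup_{\alpha}T_{\alpha}$. But over an uncountable field the set of closed points of an irreducible variety lying outside a given countable family of proper closed subvarieties is dense, so $S^{\circ}$ contains a good closed point lying in no $T_{\alpha}$, a contradiction. Hence some $C:=C_{\alpha_{0}}$ dominates $S$.

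Finally I would pass to the generic fibre of $C\to S$. Being dominant and of finite type, $C\to S$ has nonempty generic fibre $C_{\eta}$, which therefore carries a point with residue field a finite extension $K'/K$; by the compatibility of $C$ with field extension this point furnishes an $\mathbb{A}^{1}$-cylinder in $X_{\eta}\otimes_{K}K'$. Spreading this datum out, the point of $C_{\eta}$ extends to a section of $C$ over a dense open subset of the normalization $T$ of $S$ in $K'$, and the universal family over $C$ restricts there to a relative cylinder $Z\times\mathbb{A}^{1}\hookrightarrow X_{T}$; shrinking $T$ if necessary so that the relative morphism to $Z\times\mathbb{A}^{1}$ remains an isomorphism, which is an open condition valid because it holds over $\eta$, gives the conclusion, the morphism $T\to S$ being \'etale since $K'/K$ is separable in characteristic zero. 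The step I expect to be most delicate is the boundedness and constructibility of the second paragraph, namely arranging the parameter spaces so that ``containing a cylinder of bounded type'' is a genuinely finite-type, base-change-compatible condition, for this is exactly what converts the uncountability input into a single algebraic family.
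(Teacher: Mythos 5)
Your proposal is correct in outline but takes a genuinely different route from the paper. Both arguments reduce the theorem to producing a cylinder on the generic fiber $X_{\eta}$ after a finite (hence separable) extension of $L=k(S)$ and then spreading out over an \'etale $T\rightarrow S$; the difference lies in how uncountability is exploited. The paper's engine is a ``general-to-generic'' Lefschetz-principle argument: it first reduces to $k=\overline{k}$, descends $f:X\rightarrow S$ to a model $f_{0}:X_{0}\rightarrow S_{0}$ over a countable subfield $k_{0}\subset k$ of finite transcendence degree over $\mathbb{Q}$, uses uncountability to choose a $k_{0}$-embedding $L_{0}=k_{0}(S_{0})\hookrightarrow k$, so that the generic fiber of $f_{0}$ base-changed to $k$ becomes literally a closed fiber $X_{s}$ --- cylindrical by hypothesis --- and then descends from the huge extension $k\supset L_{0}$ to a finite one via the specialization Lemma \ref{lem:CylinderafterFinite extension} (spread the cylinder over a $k$-model of a finitely generated subextension and specialize at a general closed point). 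You instead use uncountability in the dual classical way: parametrize cylinders of bounded numerical type by countably many constructible families $C_{\alpha}$ over $S$, observe that the closed points of an irreducible variety over an uncountable field cannot all lie in a countable union of proper closed subsets, conclude that some $C_{\alpha}$ dominates $S$, and take a closed point of its generic fiber. What each buys: your route needs no descent to $k_{0}$, no field embedding, works directly over $k$ without passing to $\overline{k}$, and even makes Lemma \ref{lem:CylinderafterFinite extension} superfluous (finiteness of $K'/K$ falls out of Zariski's lemma applied to $C_{\eta}$); the paper's route entirely avoids the boundedness/constructibility machinery, which is exactly where all the technical weight of your proof sits.

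Two concrete caveats on that delicate step, beyond the fact that you only sketch it. First, your opening move --- ``fix an open immersion $X\hookrightarrow\overline{X}$ with $\overline{X}$ projective over a dense open of $S$'' --- silently assumes $X$ quasi-projective over $S$, which the paper's conventions do not grant (a $k$-variety is any reduced finite-type scheme). You cannot simply shrink $X$ to a quasi-projective dense open $V$: a cylinder $Z_{s}\times\mathbb{A}^{1}$ in $X_{s}$ need not contain a sub-cylinder inside $V_{s}$, since the removed locus may be a multisection over $Z_{s}$; and lifting cylinders along a Chow-lemma modification $X'\rightarrow X$ via the mechanism of Lemma \ref{lem:Cylinder-birational-pullback} b) requires the fibers $X_{s}$ to be normal, which is not among the hypotheses as stated. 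So either restrict to quasi-projective $X$ or replace Hilbert schemes by cruder bounded-degree equational parametrizations; the argument survives in both cases, but the patch should be made explicit. Second, the citation of \cite{KM78} is misplaced: base-change compatibility of the loci $C_{\alpha}$ has nothing to do with flat $\mathbb{A}^{1}$-fibrations; what you actually need is that conditions such as ``this morphism is an isomorphism onto the complement of the prescribed boundary'' are constructible on the base and stable under extension of the residue field, which is standard EGA IV constructibility combined with faithfully flat descent.
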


We next turn to the problem of finding effective conditions on the
fiberwise $\mathbb{A}^{1}$-cylinders which ensure that a global relative
$\mathbb{A}^{1}$-cylinder exists, without having to take any base
change. The question is quite subtle already in the case of fibrations
of relative dimension $2$, as illustrated on the one hand by smooth
del Pezzo fibrations with non rational generic fiber, which therefore
cannot contain any $\mathbb{A}^{1}$-cylinder \cite{DK4}, and on
the other hand by examples of one parameter families $f:X\rightarrow S$
of smooth $\mathbb{A}^{1}$-cylindrical affine cubic surfaces whose
total spaces do not contain any $\mathbb{A}^{1}$-cylinder at all,
relative to $f:X\rightarrow S$ or not \cite{DK1}. Intuitively, a
global relative cylinder should exist as soon as the fiberwise $\mathbb{A}^{1}$-cylinders
are ``unique'', in the sense that the intersection of any two of
them is again an $\mathbb{A}^{1}$-cylinder. This holds for instance
for $\mathbb{A}^{1}$-cylinders inside non rational smooth affine
surfaces, and for families $f:X\rightarrow S$ of such surfaces, it
was indeed confirmed in \cite[Theorem 10]{DK2} that $X$ contains
a relative $\mathbb{A}^{1}$-cylinder $U\simeq Z\times\mathbb{A}^{1}$
over $S$, for which the rational projection $X\dashrightarrow Z$
coincides, up to birational equivalence, with the Maximally Rationally
Connected quotient of a relative smooth projective model $\overline{f}:\overline{X}\rightarrow S$
of $X$ over $S$. 

The natural generalization in higher dimension would be to consider
normal varieties $Y$ which contain $\mathbb{A}^{1}$-cylinders $U\simeq Z\times\mathbb{A}^{1}$
over non uniruled bases $Z$. But there is a second type of obstruction
for uniqueness, which does not appear in the affine case: the fact
that a given $\mathbb{A}^{1}$-cylinder $U\simeq Z\times\mathbb{A}^{1}$
in a variety $Y$ can actually be the restriction of a $\mathbb{P}^{1}$-cylinder
$\overline{U}\simeq Z\times\mathbb{P}^{1}$ inside $Y$, with the
effect that $Y$ then contains infinitely many distinct $\mathbb{A}^{1}$-cylinders
of the form $Z\times(\mathbb{P}^{1}\setminus\{p\})$, $p\in\mathbb{P}^{1}$,
all over the same base $Z$. This possibility is eliminated by restricting
the attention to varieties $Y$ containing $\mathbb{A}^{1}$-cylinders
$U\simeq Z\times\mathbb{A}^{1}$ for which the open immersion $Z\times\mathbb{A}^{1}\hookrightarrow Y$
cannot be extended to a birational map $Z\times\mathbb{P}^{1}\dashrightarrow Y$
defined over the generic point of $Z$. An $\mathbb{A}^{1}$-cylinder
with this property is called \emph{vertically maximal }in $Y$ (see
Definition \ref{def:VertMax}), and our second main result consists
of the following characterization: 
\begin{thm}
\label{thm:MainThm} Let $k$ be a field of characteristic zero and
let $f:X\rightarrow S$ be a dominant morphism between normal $k$-varieties
such that for general closed points $s\in S$, the fiber $X_{s}$
contains a vertically maximal $\mathbb{A}^{1}$-cylinder $U_{s}\simeq Z_{s}\times\mathbb{A}^{1}$
over a non uniruled $\kappa(s)$-variety $Z_{s}$. Then $X$ contains
an $\mathbb{A}^{1}$-cylinder $U\simeq Z\times\mathbb{A}^{1}$ for
some $S$-variety $Z$. 
\end{thm}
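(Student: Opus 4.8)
The plan is to deduce the existence of a global relative cylinder from a relative Mori fibre space adapted to the fibrewise MRC fibrations, the decisive input being that vertical maximality forces the relevant conic bundle to be split over the function field of its base. I would first reduce to the generic fibre. Set $K=k(S)$ and $\eta=\operatorname{Spec}K$. It suffices to produce an $\mathbb{A}^{1}$-cylinder $Z_{\eta}\times\mathbb{A}^{1}_{K}\hookrightarrow X_{\eta}$ in the generic fibre $X_{\eta}=X\times_{S}\operatorname{Spec}K$: any such open immersion is defined over a finitely generated $k$-subalgebra of $K$ and hence spreads out to an $\mathbb{A}^{1}$-cylinder in $X$ over a dense open subset of $S$, which is the required $S$-cylinder. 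Since the general fibres $X_{s}\supseteq Z_{s}\times\mathbb{A}^{1}$ are uniruled, so is $X_{\eta}$, and the whole problem becomes birational over $K$.

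Next I build a relative conic bundle out of the fibrewise MRC fibrations. For general $s$ the projection $Z_{s}\times\mathbb{A}^{1}\to Z_{s}$ is a rationally connected fibration whose base $Z_{s}$ is non uniruled; since $X_{s}$ is uniruled its MRC fibres are one-dimensional, so by uniqueness of the MRC quotient this projection coincides birationally with the MRC fibration of $X_{s}$, and $Z_{s}$ is birational to the MRC quotient of $X_{s}$. These fibrewise MRC quotients organise into a relative MRC fibration over $S$: on suitable smooth projective models I obtain a morphism $\overline{X}\to T\to S$, where $T\to S$ has general fibres birational to the non uniruled $Z_{s}$ and $\overline{X}\to T$ has relative dimension one with general fibres $\mathbb{P}^{1}$. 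Running a relative MMP for $\overline{X}$ over $T$ (hence over $S$) then terminates, thanks to the uniruledness in the fibre direction, with a Mori conic bundle $g:V\to W$, where $W$ is birational to $T$ over $S$; its generic fibre is a smooth conic $C$ over $k(W)$.

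The heart of the argument is to split this conic using vertical maximality. For general $s$, projectivising the $\mathbb{A}^{1}$-fibres of $Z_{s}\times\mathbb{A}^{1}\to Z_{s}$ adds over the generic point of $Z_{s}$ a single $\kappa(s)(Z_{s})$-rational point at infinity, which sweeps out a rational section $B_{s}$ of the fibration of degree one. Vertical maximality (Definition \ref{def:VertMax}) is exactly the statement that this section at infinity is not absorbed into $X_{s}$ by extending the cylinder to a $\mathbb{P}^{1}$-cylinder, so $B_{s}$ is a canonical boundary curve; combined with the rigidity furnished by the non uniruledness of the MRC base, I expect the $B_{s}$ to glue into a horizontal prime divisor $B\subset V$ which is a rational section of $g$ of degree one over $W$. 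Its generic point then provides a $k(W)$-rational point of $C$, forcing $C\cong\mathbb{P}^{1}_{k(W)}$ and $C\setminus B\cong\mathbb{A}^{1}_{k(W)}$. Spreading out the trivialisation of this generic fibre, the restriction of $V\setminus B\to W$ over a dense open $Z\subseteq W$ becomes a trivial $\mathbb{A}^{1}$-bundle $Z\times\mathbb{A}^{1}$.

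It remains to realise the cylinder $Z\times\mathbb{A}^{1}\hookrightarrow V\setminus B$ inside $X$. Over general closed $s$ the constructed cylinder is, by construction, the complement of the section at infinity $B_{s}$ in the MRC fibration, hence shares a dense open subset with the prescribed cylinder $Z_{s}\times\mathbb{A}^{1}\subset X_{s}$; since $V$ is birational to $\overline{X}\supseteq X$ over $S$ and the construction is product-like, shrinking $Z$ places $Z\times\mathbb{A}^{1}$ inside $X$. The step I expect to be genuinely hard is the descent of the previous paragraph: showing that the fibrewise sections at infinity glue to an honest rational section of $g$ defined over $k(W)$, rather than to a multisection that is split over every $\kappa(s)(W_{s})$ while corresponding to a \emph{nontrivial} form of $\mathbb{P}^{1}$ over $k(W)$. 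This is precisely the conic-bundle obstruction highlighted in the introduction, and it is vertical maximality together with the rigidity of the non uniruled MRC base that rules it out and lets the section, and hence the $\mathbb{A}^{1}$-cylinder, descend to the generic fibre without any base change.
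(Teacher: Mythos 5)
Your overall skeleton is the paper's: compactify relatively, run a relative MMP ending in a Mori conic bundle over a non-uniruled base, use vertical maximality to produce a section at infinity that splits the conic, and re-insert the resulting cylinder into $X$. But the proof is incomplete at exactly the point you flag yourself: you only \emph{expect} the fibrewise sections at infinity $B_{s}$ to ``glue into a horizontal prime divisor $B$ which is a rational section of degree one over $W$,'' and ``vertical maximality together with the rigidity of the non uniruled MRC base'' is not an argument for this. In general, fibrewise rational points of the conics over the closed fibres do \emph{not} descend to a $k(W)$-point of the generic conic --- that is precisely the conic-bundle obstruction --- so the decisive splitting step is asserted rather than proved. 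The paper closes this gap by reversing the logic: it never glues fibrewise data. It first constructs a good relative smooth projective completion $Y\supseteq X$ over $S$ whose boundary $\Delta$ is a \emph{globally defined} divisor (with $\mathrm{Exc}$-image of codimension $\geq 2$ in $X$). Since $Y_{s}$ is projective, each general $\mathbb{A}^{1}$-fibre of the cylinder in $Y_{s}\setminus\Delta_{s}$ closes up by adding one point, and vertical maximality (Definition \ref{def:VertMax}) forces that point into $\Delta_{s}$; hence the horizontal part $\Delta_{0}$ of $\Delta$ meets the closure of a general fibre in exactly one place. After the relative MMP and Lemma \ref{lem:Factorization-MFS} identify the Mori fibre space as a $\mathbb{P}^{1}$-fibration restricting to the compactified cylinder over an open subset of $T$, the global divisor $\Delta_{0}$ is therefore horizontal of degree one over $T$, hence irreducible and birational to $T$: a degree-one multisection \emph{is} an honest rational section defined over $k(T)$, so the nontrivial-form scenario is excluded numerically, with no descent needed (Lemma \ref{lem:MMP-Cylinder}).

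A secondary gap: your last step, ``since $V$ is birational to $\overline{X}\supseteq X$ over $S$ and the construction is product-like, shrinking $Z$ places $Z\times\mathbb{A}^{1}$ inside $X$,'' is not automatic. The image of the exceptional locus of $Y\dashrightarrow X$ has codimension $\geq 2$ in $X$, but the cylinder in $Y\setminus\Delta$ could a priori dominate a divisorial component of that exceptional locus sitting over this small set, in which case it would not transport into $X$ at all. The paper needs a genuine argument here (Proposition \ref{lem:Descending-cylinders-from-completion}): it invokes the uniqueness of vertically maximal cylinders over non-uniruled bases (Proposition \ref{prop:UniqueCylinder-vertMax}) fibrewise to show that the constructed cylinder is equivalent, on general closed fibres, to the prescribed one in $X_{s}$, and hence cannot dominate the exceptional locus --- a second, independent use of vertical maximality and non-uniruledness that your proposal omits.
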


The article is organized as follows. The first section contains a
quick review of rationally connected and uniruled varieties and some
explanation concerning the minimal model program for varieties defined
over arbitrary fields of characteristic zero which plays a central
role in the proof of Theorem \ref{thm:MainThm}. In section two, we
establish basic properties of $\mathbb{A}^{1}$-cylindrical varieties.
Theorem \ref{thm:BaseChangeThm} is then derived in section three
from quite standard ``general-to-generic'' Lefschetz principle and
specialization arguments. Finally, section four is devoted to the
proof of Theorem \ref{thm:MainThm}, which proceeds through a careful
study of the output of a relative minimal model program applied to
a suitably constructed smooth projective model $\overline{f}:Y\rightarrow S$
of $X$ over $S$. 

\section{Preliminaries}

In what follows, unless otherwise stated, $k$ is a field of characteristic
zero, and all objects considered will be assumed to be defined over
$k$. A $k$-variety is a reduced scheme of finite type over $k$.
For a morphism $f:X\rightarrow S$ and another morphism $T\rightarrow S$,
the symbol $X_{T}$ will denote the fiber product $X\times_{S}T$.
In particular for a point $s\in S$, closed or not, we write $X_{s}=f^{-1}(s)=X\times_{S}\mathrm{Spec}(\kappa(s))$
where $\kappa(s)$ denotes the residue field of $s$. In addition,
if $T=\mathrm{Spec}(K)$ for a field $K$, then $X_{T}$ will also
sometimes be denoted by $X_{K}$. 

\subsection{Recollection on rational connectedness and uniruledness}
\begin{defn}
(See \cite[IV.3 Definition 3.2 and IV.1 Definition 1.1]{Ko96}) Let
$f:X\rightarrow S$ be an integral scheme over a scheme $S$. We say
that $X$ is: 

a) \emph{Rationally connected over} $S$ if there exists an $S$-scheme
$B$ and a morphism $u:B\times\mathbb{P}^{1}\rightarrow X$ of schemes
over $S$ such $u\times_{B}u:(B\times\mathbb{P}^{1})\times_{B}(B\times\mathbb{P}^{1})\rightarrow X\times_{S}X$
is dominant. 

b) \emph{Uniruled} over $S$ if there exists an $S$-scheme $B$ of
relative dimension $\dim(X/S)-1$ and a dominant rational map $u:B\times\mathbb{P}^{1}\dashrightarrow X$
of schemes over $S$. 

c) \emph{Ruled} over $S$ if there exists an $S$-scheme $B$ of relative
dimension $\dim(X/S)-1$ and a dominant birational map $u:B\times\mathbb{P}^{1}\dashrightarrow X$
of schemes over $S$. 
\end{defn}

A variety $X$ defined over a field $k$ is called rationally connected
(resp. uniruled, resp. ruled) if it is rationally connected (resp.
uniruled, resp. ruled) over $\mathrm{Spec}(k)$. Recall \cite[IV.3 3.2.5 and IV.1 Proposition 1.3]{Ko96}
that the first two notions are independent of the field over which
$X$ is defined. In particular, $X$ is rationally connected (resp.
uniruled) if and only if $X_{K}$ is rationally connected (resp. uniruled)
over $\mathrm{Spec}(K)$ for every field extension $k\subset K$.
In contrast, it is well-known that the property of being ruled depends
on the base field $k$: for instance a smooth conic $C\subset\mathbb{P}_{k}^{2}$
without $k$-rational point is uniruled but not ruled, but becomes
ruled after base extension to a suitable quadratic extension of $k$. 

The following lemma is probably well-known, but we include a proof
because of lack of an appropriate reference.
\begin{lem}
\label{lem:Factorization-Lemma} Let $Z$ be a non uniruled $k$-variety
and let $h:Y\rightarrow T$ be a surjective proper morphism between
normal $k$-varieties, with rationally connected general fibers. Then
every dominant rational map $p:Y\dashrightarrow Z$ factors through
a rational map $q:T\dashrightarrow Z$. 
\end{lem}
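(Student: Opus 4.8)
The plan is to prove this by exploiting the tension between rational connectedness of the fibers of $h$ and the non-uniruledness of $Z$. The heuristic is clear: a dominant rational map $p\colon Y\dashrightarrow Z$ must contract each rationally connected fiber of $h$ to a point, since a non-uniruled variety admits no nonconstant rational image of a rationally connected variety. If every general fiber is contracted, then $p$ descends to a rational map on the base $T$.

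Here is how I would carry this out. First I would reduce to the case where $p$ is a morphism by replacing $Y$ with a resolution of the indeterminacy locus of $p$; since $Z$ is non uniruled this does not affect the conclusion, and properness of $h$ is preserved by composing with the resolution morphism. Next I would pass to the generic fiber: let $\eta$ be the generic point of $T$ and consider the general fiber $Y_t$ for $t\in T$ a general closed point, which by hypothesis is rationally connected. The key step is to show that the restriction $p|_{Y_t}\colon Y_t\dashrightarrow Z$ is constant. This follows because a dominant rational map from a rationally connected variety would force its image to be uniruled (indeed rationally connected), contradicting the assumption that $Z$ is non uniruled; more precisely, the closure of the image $p(Y_t)$ in $Z$ would be a positive-dimensional variety dominated by a rationally connected variety, hence uniruled, which is impossible inside the non uniruled $Z$.

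Once every general fiber is contracted to a point, the map $p$ is constant along the fibers of $h$ over a dense open subset of $T$, so the two maps $p$ and $h$ together give a map to $T\times Z$ whose image is the graph of a rational map $q\colon T\dashrightarrow Z$ with $p = q\circ h$ birationally. To make this precise I would consider the morphism $(h,p)\colon Y\to T\times Z$ (after resolving indeterminacy) and use that its image has the same dimension as $T$, since the fibers of $h$ are contracted; the projection from the image to $T$ is then generically finite and birational, yielding the desired factorization. The main obstacle is ensuring the argument is valid over an arbitrary, possibly non-closed, field $k$: I must invoke that uniruledness is insensitive to field extension (as recalled in the excerpt right after the definition), so that rational connectedness of the general \emph{geometric} fibers and non-uniruledness of $Z$ survive base change, allowing the contraction argument to run after passing to an algebraic closure if needed and then descending.
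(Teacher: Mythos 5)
There is a genuine gap at the central step of your argument, namely the claim that the restriction $p|_{Y_t}$ to a general fiber must be constant \emph{because} ``the closure of the image $p(Y_t)$ would be a positive-dimensional uniruled variety, which is impossible inside the non uniruled $Z$.'' This principle is false: a non uniruled variety can perfectly well contain positive-dimensional uniruled subvarieties, and even rational curves. For instance, the blow-up of an abelian surface at a point is non uniruled yet contains the exceptional $\mathbb{P}^1$, and K3 surfaces are non uniruled yet contain rational curves. So from the non-uniruledness of $Z$ alone you cannot contract a single fiber; a priori the image $p(Y_t)$ could be one of these uniruled subvarieties. The dominance of $p$ must enter precisely at this step: what contradicts non-uniruledness is not that \emph{one} fiber has uniruled image, but that the images of the (varying) fibers would sweep out a dense subset of $Z$, so that $Z$ itself would be dominated by a family of rational curves. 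And even granting the dense sweep, the definition of uniruledness requires you to exhibit an actual dominant family $B\times\mathbb{P}^1\dashrightarrow Z$; having a rational curve through each point of a dense subset yields uniruledness directly only over an uncountable algebraically closed field, via the very-general-point criterion --- a reduction you could legitimately perform (uniruledness is insensitive to field extension, as recalled in the paper), but which your proposal neither invokes nor replaces by a family construction.

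The paper's proof supplies exactly the missing uniformity. After resolving indeterminacy and shrinking $T$ so that $h$ is faithfully flat and proper with rationally connected fibers, it fixes a single rationally chain-connecting family $u:\mathcal{C}=B\times\mathbb{P}^{1}\rightarrow Y$ over $T$, and considers the closed subset $X=(p\times p)^{-1}(\Delta_Z)\subseteq Y\times_{T}Y$. If $X\neq Y\times_{T}Y$, then since $Y\times_{T}Y\rightarrow T$ is flat hence open, the complement of $X$ has dense open image in $T$, and composing $u$ with $p$ produces a family of \emph{non-constant} rational curves in $Z$ dominating it --- an honest witness of uniruledness, valid over any field of characteristic zero, contradicting the hypothesis. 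Hence $X=Y\times_{T}Y$, i.e.\ $p$ is constant on \emph{all} fibers of $h$, and the factorization $p=q\circ h$ follows by faithfully flat descent (your graph argument via $(h,p):Y\rightarrow T\times Z$ would also work at this final stage, and is not where the problem lies). Note also the paper's subsequent remark --- the cone over an elliptic curve, which is rationally \emph{chain} connected with a dominant projection to the curve --- showing how delicate this contraction step is: it genuinely uses the family of connecting rational curves, not a pointwise subvariety argument. To repair your proof with minimal change, base change to an uncountable algebraically closed field and use that a rational curve passes through every point of a dense open subset of $Z$ swept by the images $p(Y_t)$, so through a very general point, whence $Z$ is uniruled; as written, however, the key deduction is unsupported.
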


\begin{proof}
Since the property of being non uniruled is invariant under birational
equivalence, we can assume without loss of generality that $Z$ is
projective. Since $h:Y\rightarrow T$ is proper, for every blow-up
$\sigma:\tilde{Y}\rightarrow Y$ of $Y$, the composition $h\circ\sigma:\tilde{Y}\rightarrow T$
is again proper with rationally connected general fibers \cite[IV.3.3]{Ko96}.
So blowing-up $Y$ to resolve the indeterminacy of $p$, we can further
assume that $p$ is a morphism, and by shrinking $T$ that $h:Y\rightarrow T$
is faithfully flat and proper, with rationally connected fibers. Then
$Y$ is rationally chain connected over $T$. Let $u:\mathcal{C}=B\times\mathbb{P}^{1}\rightarrow Y$
be a morphism of algebraic varieties over $T$ witnessing this property.
The inverse image by $p\times p:Y\times_{T}Y\rightarrow Z\times Z$
of the diagonal is a closed subset $X$ of $Y\times_{T}Y$. If $X\neq Y\times_{T}Y$
then since $h\circ\mathrm{pr_{1}}:Y\times_{T}Y\rightarrow T$ is flat
hence open, the image of $Y\times_{T}Y\setminus X$ is a dense open
subset $T_{0}$ of $T$. Replacing $T$ by $T_{0}$, this would imply
that the image of $(p\circ u)\times_{B}(p\circ u):\mathcal{C}\times_{B}\mathcal{C}\rightarrow Z\times Z$
is not contained in the diagonal, in contradiction with the non-uniruledness
of $Z$. Thus $X=Y\times_{T}Y$ and so, $p$ is constant on the fibers
of $h:Y\rightarrow T$. By faithfully flat descent, there exists a
unique morphism $q:T\rightarrow Z$ such that $p=q\circ h$. 
\end{proof}
\begin{rem}
The conclusion of Lemma \ref{lem:Factorization-Lemma} does not hold
under the weaker assumption that the general fibers of $h:Y\rightarrow T$
are rationally chain connected. For instance, let $Y$ be the projective
cone over a smooth elliptic curve $Z\subset\mathbb{P}_{k}^{2}$ and
let $h:Y\rightarrow T=\mathrm{Spec}(k)$ be the canonical structure
morphism. Then $Y$ is rationally chain connected over $\mathrm{Spec}(k)$
and the projection $p:Y\dashrightarrow Z$ form the vertex of the
cone is a dominant rational map, which therefore does not factor through
$\mathrm{Spec}(k)$. 
\end{rem}

\subsection{\label{subsec:Minimal-Model-Program} Minimal Model Program over
non closed fields}

In the proof of Theorem \ref{thm:MainThm} given in section four below,
we will make use of minimal model program over arbitrary fields of
characteristic zero. We freely use the standard terminology and conventions
in this context, and just recall the mild adaptations needed to run
the minimal model program over a non closed field $k$ in a form appropriate
to our needs. It is well known (see e.g. \cite[$\S$ 2.2]{KM98} and
\cite[$\S$ 3.9]{K14}) that the minimal model program over an algebraically
closed field has natural equivariant generalizations to the case of
varieties with finite group actions, actually groups whose actions
on the Neron-Severi groups of the varieties at hand factor through
those of finite groups. This applies in particular to the situation
of a smooth projective morphism $\overline{f}:Y\rightarrow S$ between
normal quasi-projective varieties defined over a field $k$: after
the base change $\overline{f}_{\overline{k}}:Y_{\overline{k}}\rightarrow S_{\overline{k}}$
to an algebraic closure $\overline{k}$ of $k$, one can perform all
the basic steps of $K_{Y_{\overline{k}}}$-mmp with scalings relative
to $\overline{f}_{\overline{k}}:Y_{\overline{k}}\rightarrow S_{\overline{k}}$
as in \cite{BCHM} in an equivariant way with respect to the natural
action of the Galois group $G=\mathrm{Gal}(\overline{k}/k)$. Compared
to the genuine relative $K_{Y_{\overline{k}}}$-mmp with scalings,
this program runs in the category of varieties which are projective
over $S_{\overline{k}}$, with only terminal $G$-$\mathbb{Q}$-factorial
singularities, i.e. varieties with terminal singularities on which
every $G$-invariant Weil divisor is $\mathbb{Q}$-Cartier.

The termination of arbitrary sequences of $G$-equivariant flips is
not yet verified in a full generality, but as far as $K_{Y_{\overline{k}}}$
is not pseudo-effective over $S_{\overline{k}}$, it follows from
\cite[Corollary 1.3.3]{BCHM} that there exists a $G$-equivariant
$K_{Y_{\overline{k}}}$-mmp $\Theta:Y_{\overline{k}}\dashrightarrow\tilde{Y}$
over $S_{\overline{k}}$ with scalings by an $\overline{f}_{\overline{k}}$-ample
$G$-invariant divisor which ends with a $G$-Mori fiber space $\tilde{\rho}:\tilde{Y}\rightarrow\tilde{T}$
over a normal $S_{\overline{k}}$-variety $\tilde{T}$. That is, $\tilde{\rho}:\tilde{Y}\rightarrow\tilde{T}$
is a projective $G$-equivariant morphism between quasi-projective
$\overline{k}$-varieties with the following properties: $\tilde{\rho}_{*}\mathcal{O}_{\tilde{Y}}=\mathcal{O}_{\tilde{T}}$
, $\dim\tilde{T}<\dim\tilde{Y}$, $\tilde{Y}$ has only terminal $G$-$\mathbb{Q}$-factorial
singularities, the anti-canonical divisor $-K_{\tilde{Y}}$ is $\tilde{\rho}$-ample
and the relative $G$-invariant Picard number of $\tilde{Y}$ over
$\tilde{T}$ is equal to one. 

The birational map $\Theta:Y_{\overline{k}}\dashrightarrow\tilde{Y}$
is a composition of either divisorial contractions associated to successive
$G$-invariant extremal faces in the cone $\overline{\mathrm{NE}}(Y_{\overline{k}}/S_{\overline{k}})$
or flips which are all defined over $k$. The last morphism $\tilde{\rho}:\tilde{Y}\rightarrow\tilde{T}$
corresponds to a $G$-equivariant extremal contraction of fiber type
and is defined over $k$ as well. It follows that $\Theta:Y_{\overline{k}}\dashrightarrow\tilde{Y}$
and $\tilde{\rho}:\tilde{Y}\rightarrow\tilde{T}$ can be equivalently
seen as the base change to $\overline{k}$ of a sequence $\theta:Y\dashrightarrow Y'$
of $K_{Y}$-negative divisorial extremal contractions and flips between
$k$-varieties which are $\mathbb{Q}$-factorial over $k$ and projective
over $S$, and an extremal contraction of fiber type $\rho':Y'\rightarrow T$
between normal $k$-varieties, such that $-K_{Y'}$ is $\rho'$-ample
and the relative Picard number of $Y'$ over $T$ is equal to one. 

\section{$\mathbb{A}^{1}$-cylindrical varieties}

In this section, we introduce and establish basic properties of a
special class of ruled varieties called $\mathbb{A}^{1}$-cylindrical
varieties. 
\begin{defn}
Let $f:X\rightarrow S$ be a morphism of schemes. An \emph{$\mathbb{A}^{1}$-cylinder}
in $X$ over $S$ is a pair $(Z,\varphi)$ consisting of an $S$-scheme
$Z\rightarrow S$ and an open embedding $\varphi:Z\times\mathbb{A}^{1}\hookrightarrow X$
of $S$-schemes. We say that $X$ is\emph{ $\mathbb{A}^{1}$-cylindrical
over }$S$ if there exists an $\mathbb{A}^{1}$-cylinder $(Z,\varphi)$
in $X$ over $S$. 
\end{defn}

A variety $X$ defined over a field $k$ is called $\mathbb{A}^{1}$-cylindrical
over $k$ if it is $\mathbb{A}^{1}$-cylindrical over $\mathrm{Spec}(k)$.
Similarly as for ruledness, the property of being $\mathbb{A}^{1}$-cylindrical
depends on the base field $k$: a smooth conic $C\subset\mathbb{P}_{k}^{2}$
without $k$-rational point is not $\mathbb{A}^{1}$-cylindrical over
$k$ but becomes $\mathbb{A}^{1}$-cylindrical after base extension
to a suitable quadratic extension of $k$. 
\begin{defn}
A \emph{sub-$\mathbb{A}^{1}$-cylinder} of an $\mathbb{A}^{1}$-cylinder
$(Z,\varphi)$ in $X$ over $S$ is an $\mathbb{A}^{1}$-cylinder
$(Z',\varphi')$ in $X$ over $S$ for which there exists a commutative
diagram \[\begin{tikzcd}    Z' \times \mathbb{A}^1 \arrow[r,hook,"j"] \arrow[d,"\mathrm{pr}_{Z'}"] \arrow[rr,bend left=30,"\varphi'"] & Z\times \mathbb{A}^1 \arrow[r,"\varphi"] \arrow[d,"\mathrm{pr}_{Z}"] & X \\ Z' \arrow[r,hook,"i"] & Z \end{tikzcd}\]for
some open embeddings of $S$-schemes $i:Z'\hookrightarrow Z$ and
$j:Z'\times\mathbb{A}^{1}\hookrightarrow Z\times\mathbb{A}^{1}$.
Two $\mathbb{A}^{1}$-cylinders in $X$ over $S$ are called \emph{equivalent}
if they have a common sub-$\mathbb{A}^{1}$-cylinder over $S$. 
\end{defn}

\subsection{$\mathbb{A}^{1}$-cylinders and $\mathbb{P}^{1}$-fibrations}

Recall that a $\mathbb{P}^{1}$-fibration is a proper morphism $h:Y\rightarrow T$
between integral schemes whose fiber over the generic point of $T$
is a form of $\mathbb{P}^{1}$ over the function field $K$ of $T$.
Given an $\mathbb{A}^{1}$-cylinder $(Z,\varphi)$ in an algebraic
variety $X$ over $k$, the composition of $\varphi^{-1}:X\dashrightarrow Z\times\mathbb{A}^{1}$
with the projection $\mathrm{pr}_{Z}:Z\times\mathbb{A}^{1}\rightarrow Z$
extends on a suitable complete model $Y$ of $X$ to a $\mathbb{P}^{1}$-fibration
$h:Y\rightarrow T$ over a complete model $T$ of $Z$, restricting
to a trivial $\mathbb{P}^{1}$-bundle over a non empty open subset
$Z_{0}$ of $Z\subset T$. Conversely, the total space $Y$ of a $\mathbb{P}^{1}$-fibration
$h:Y\rightarrow T$ is $\mathbb{A}^{1}$-cylindrical over $T$ provided
that $h$ admits a rational section $H\subset Y$. Indeed, if so,
there exists a dense open subset $Z$ of $T$ such that $h^{-1}(Z)\simeq Z\times\mathbb{P}^{1}$
and $H\cap h^{-1}(Z)\simeq Z\times\{\infty\}$ for some fixed $k$-rational
point $\infty\in\mathbb{P}^{1}$, which implies in turn that the open
subset $h^{-1}(Z)\cap(Y\setminus H)$ of $Y$ is isomorphic to $Z\times\mathbb{A}^{1}$. 

The following characterization will be useful for the proof of Theorem
\ref{thm:MainThm}:
\begin{lem}
\label{lem:Factorization-MFS} Let $h:Y\rightarrow T$ be a surjective
proper morphism between normal varieties over a field $k$ of characteristic
zero, with irreducible and rationally connected general fibers. Suppose
that $Y$ contains an $\mathbb{A}^{1}$-cylinder $(Z,\varphi)$ for
some non-uniruled $k$-variety $Z$. Then $h:Y\rightarrow T$ is a
$\mathbb{P}^{1}$-fibration and there exists a sub-$\mathbb{A}^{1}$-cylinder
$(Z',\varphi')$ of $(Z,\varphi)$ and commutative diagram \[\begin{tikzcd}    Z' \times \mathbb{A}^1 \arrow[r,hook,"\varphi'"] \arrow[d,"\mathrm{pr}_{Z'}"] & Y \arrow[d,"h"] \\ Z' \arrow[r,hook,"i"] & T \end{tikzcd}\]
for some open embedding $i:Z'\hookrightarrow T$. In particular, $T$
is not uniruled. 
\end{lem}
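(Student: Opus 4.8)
The plan is to exploit the rational map furnished by the cylinder together with the Factorization Lemma. Write $\mathrm{pr}_{Z}:Z\times\mathbb{A}^{1}\rightarrow Z$ for the projection and let $p:Y\dashrightarrow Z$ be the dominant rational map obtained as the composition of $\varphi^{-1}$ with $\mathrm{pr}_{Z}$, defined on the dense open cylinder $\varphi(Z\times\mathbb{A}^{1})\subseteq Y$. Since $Z$ is non-uniruled and $h:Y\rightarrow T$ is a surjective proper morphism between normal varieties with rationally connected general fibers, Lemma \ref{lem:Factorization-Lemma} applies and yields a rational map $q:T\dashrightarrow Z$ with $p=q\circ h$. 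As $p$ is dominant and $h$ is surjective, $q$ is dominant as well.

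First I would pin down the fibers of $h$. For a general closed point $t\in T$, put $z=q(t)\in Z$. Since $p=q\circ h$, the map $p$ is constant equal to $z$ on the fiber $Y_{t}=h^{-1}(t)$ wherever it is defined; as the cylinder is dense and open, $Y_{t}\cap\varphi(Z\times\mathbb{A}^{1})$ is a dense open subset of $Y_{t}$ on which $p$ coincides with $\mathrm{pr}_{Z}$. Hence this open subset is contained in the single affine line $\varphi(\{z\}\times\mathbb{A}^{1})$, which forces $\dim Y_{t}\le 1$. Because the general fibers of $h$ are rationally connected of positive dimension (so that $\dim T<\dim Y$), we get $\dim Y_{t}=1$ and, comparing the two irreducible curves, $Y_{t}$ equals the closure $C_{z}$ of the cylinder line $\varphi(\{z\}\times\mathbb{A}^{1})$. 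In particular $\dim T=\dim Y-1=\dim Z$, and distinct general fibers $Y_{t}$ correspond to distinct curves $C_{z}$, so $q$ is generically injective, hence birational.

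To see that $h$ is a $\mathbb{P}^{1}$-fibration, I would pass to the generic fiber $Y_{K}$ over $K=k(T)$: it is integral, proper over $K$, of dimension one, and normal as a localization of the normal variety $Y$, hence a smooth curve. The equivalence between rational connectedness of the general fibers and geometric rational connectedness of the generic fiber shows that $Y_{K}$ is a geometrically integral smooth proper rational curve over $K$, that is, a form of $\mathbb{P}^{1}$; thus $h$ is a $\mathbb{P}^{1}$-fibration.

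It remains to produce the sub-cylinder and the commutative square. Since $q$ is birational I would choose a dense open subset $Z'\subseteq Z$, contained in the locus over which $(Z,\varphi)$ is defined and shrunk so that $h(\varphi(Z'\times\mathbb{A}^{1}))$ lands in the isomorphism locus of $q$, such that $q$ restricts to an isomorphism from an open subset of $T$ onto $Z'$; let $i:Z'\hookrightarrow T$ be the inverse open immersion and set $\varphi'=\varphi|_{Z'\times\mathbb{A}^{1}}$, giving a sub-$\mathbb{A}^{1}$-cylinder $(Z',\varphi')$. For $(z',a)\in Z'\times\mathbb{A}^{1}$ one has $q(h(\varphi'(z',a)))=p(\varphi'(z',a))=z'$, so $h(\varphi'(z',a))=i(z')$, which is exactly the asserted commutativity $h\circ\varphi'=i\circ\mathrm{pr}_{Z'}$. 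Finally $T$ is birational to $Z$ through $q$, and since non-uniruledness is a birational invariant, $T$ is non-uniruled. The main obstacle is the fiber-dimension analysis: controlling $\dim Y_{t}$ through its containment in a single cylinder line and upgrading the generically finite dominant map $q$ to a birational one, so that the open immersion $i$ in the diagram exists; the $\mathbb{P}^{1}$-fibration statement then follows from normality of the generic fiber.
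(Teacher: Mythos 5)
Your argument is correct, and it pivots on the same key tool as the paper's proof, namely Lemma \ref{lem:Factorization-Lemma} applied to the projection $p=\mathrm{pr}_{Z}\circ\varphi^{-1}$ to produce the factorization $q:T\dashrightarrow Z$; but the middle of your proof takes a genuinely different route. The paper first compactifies: it shrinks $Z$ to a smooth affine variety, passes to a smooth projective model $\overline{Z}$, and resolves $p$ to an honest $\mathbb{P}^{1}$-fibration $p:\tilde{Y}\rightarrow\overline{Z}$ on a blow-up $\sigma:\tilde{Y}\rightarrow Y$; the dimension count, the $\mathbb{P}^{1}$-fibration property of $h$, and the degree-one property of $q$ (from irreducibility of the fibers of $p$) are all read off from this auxiliary fibration. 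You avoid $\overline{Z}$ and $\tilde{Y}$ entirely and argue fiberwise: since $p$ and $q\circ h$ agree on their common domain, a dense open subset of a general closed fiber $Y_{t}$ lies in the single line $\varphi(\{q(t)\}\times\mathbb{A}^{1})$, so $Y_{t}$ is the closure of that line; this yields $\dim T=\dim Y-1$, generic injectivity (hence birationality) of $q$, and, after shrinking, the commutativity $h\circ\varphi'=i\circ\mathrm{pr}_{Z'}$ all at once, with the square verified more explicitly than in the paper. The price is your $\mathbb{P}^{1}$-fibration step: you need the generic fiber $Y_{K}$ to be geometrically rationally connected, and the ``equivalence'' you invoke is not automatic --- it requires geometric integrality of $Y_{K}$ (which spreads out from irreducibility of general fibers) and the openness of rational connectedness in smooth proper families in characteristic zero (Koll\'ar--Miyaoka--Mori), so a citation is in order; alternatively you could bypass it entirely, since your own fiber identification shows that $Y_{K}$ is a regular proper curve containing $\mathbb{A}_{K}^{1}$ as a dense open subset, hence is $\mathbb{P}_{K}^{1}$. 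One shared loose end: both you and the paper tacitly assume $\dim T<\dim Y$ (you write ``rationally connected of positive dimension''), which does not formally follow from the stated hypotheses --- the identity map has irreducible and, under the paper's definition, rationally connected point fibers --- though this degenerate case is harmless in the intended application to Mori fiber spaces.
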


\begin{proof}
By shrinking $Z$, we can assume that it is smooth and affine. Letting
$\overline{Z}$ be a smooth projective model of $Z$, the composition
$\mathrm{pr}_{Z}\circ\varphi^{-1}$ defines a dominant rational map
$\mathrm{pr}_{Z}\circ\varphi^{-1}:Y\dashrightarrow\overline{Z}$ which
lifts to a $\mathbb{P}^{1}$-fibration $p:\tilde{Y}\rightarrow\overline{Z}$
on some blow-up $\sigma:\tilde{Y}\rightarrow Y$ of $Y$. Since $\overline{Z}$
is not uniruled, and $h\circ\sigma:\tilde{Y}\rightarrow T$ is proper
with rationally connected general fibers, it follows from Lemma \ref{lem:Factorization-Lemma}
that $p$ factors through a dominant rational map $q:T\dashrightarrow\overline{Z}$.
So $\dim T\geq\dim\overline{Z}$ and since $\dim\overline{Z}=\dim\tilde{Y}-1\geq\dim T$,
we conclude that $\dim T=\dim Y-1$. This implies that $h\circ\sigma:\tilde{Y}\rightarrow T$
is a $\mathbb{P}^{1}$-fibration, hence that $h$ is a $\mathbb{P}^{1}$-fibration.
Since the general fiber of $p:\tilde{Y}\rightarrow\overline{Z}$ are
irreducible, $q$ has degree $1$, hence is birational. Now it suffices
to choose for $Z'$ an open subset of $Z\subset\overline{Z}$ on which
$q^{-1}$ restricts to an isomorphism onto its image. 
\end{proof}

\subsection{Birational modifications preserving $\mathbb{A}^{1}$-cylinders}

In contrast with ruledness, the property of containing an $\mathbb{A}^{1}$-cylinder
is obviously not invariant under birational equivalence. Nevertheless,
it is stable under certain particular birational modifications which
we record here for later use: 
\begin{lem}
\label{lem:Cylinder-birational-pullback} Let $(Y,\Delta)$ be a pair
consisting of a normal $k$-variety and a reduced divisor on it, let
$\theta:Y\dashrightarrow Y'$ be a birational map to a normal $k$-variety
$Y'$ and let $\Delta'=\theta_{*}(\Delta)$ be the proper transform
of $\Delta$ on $Y'$. Then the following hold:

a) If $\theta$ is an isomorphism in codimension one then $Y\setminus\Delta$
is $\mathbb{A}^{1}$-cylindrical over $k$ if and only if so is $Y'\setminus\Delta'$.

b) If $\theta$ is a proper morphism and $Y'\setminus\Delta'$ contains
an $\mathbb{A}^{1}$-cylinder $(Z',\varphi')$ over $k$ then there
exists a sub-cylinder $(Z,\varphi)$ of $(Z',\varphi')$ such that
$(Z,\theta^{-1}\circ\varphi)$ is an $\mathbb{A}^{1}$-cylinder in
$Y\setminus\Delta$ over $k$.

c) If $\theta$ is a proper morphism, each irreducible component of
pure codimension one of the exceptional locus $\mathrm{Exc}(\theta)$
of $\theta$ is uniruled and $Y\setminus\Delta$ contains an $\mathbb{A}^{1}$-cylinder
$(Z,\varphi)$ for some non-uniruled $k$-variety $Z$ then there
exists a sub-cylinder $(Z',\varphi')$ of $(Z,\varphi)$ such that
$(Z',\theta\circ\varphi')$ is a $\mathbb{A}^{1}$-cylinder in $Y'\setminus\Delta'$. 
\end{lem}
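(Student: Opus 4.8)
The plan is to prove the three parts of Lemma~\ref{lem:Cylinder-birational-pullback} in order, exploiting the fact that each successive statement concerns a more restrictive type of birational map. For part (a), since $\theta:Y\dashrightarrow Y'$ is an isomorphism in codimension one, there exist open subsets $Y_{0}\subset Y$ and $Y_{0}'\subset Y'$ with complements of codimension at least two such that $\theta$ restricts to an isomorphism $Y_{0}\xrightarrow{\sim}Y_{0}'$ carrying $\Delta\cap Y_{0}$ isomorphically onto $\Delta'\cap Y_{0}'$. If $Y\setminus\Delta$ contains an $\mathbb{A}^{1}$-cylinder $(Z,\varphi)$, I would shrink $Z$ so that the image of $\varphi$ lands inside $Y_{0}\setminus\Delta$; this is possible because an $\mathbb{A}^{1}$-cylinder is an irreducible open subset which cannot be contained in the closed set of codimension $\geq 2$ removed when passing to $Y_{0}$, so a general fiber $\{z\}\times\mathbb{A}^{1}$ meets $Y_{0}$ in a dense open subset. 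After shrinking, $\theta\circ\varphi$ is an open embedding of a sub-cylinder into $Y_{0}'\setminus\Delta'\subset Y'\setminus\Delta'$, giving the cylinder on the other side; the converse is symmetric.

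For part (b), where $\theta:Y\to Y'$ is now an honest proper birational morphism, I would use that $\theta$ is an isomorphism over the open locus $Y'\setminus\theta(\mathrm{Exc}(\theta))$. Given the cylinder $(Z',\varphi')$ in $Y'\setminus\Delta'$, I would shrink $Z'$ so that the image of $\varphi'$ avoids the (closed, lower-dimensional) locus where $\theta^{-1}$ fails to be a morphism; again a general $\mathbb{A}^{1}$-fiber meets the isomorphism locus in a dense open set, so after replacing $Z'$ by a suitable open $Z\subset Z'$ the composition $\theta^{-1}\circ\varphi'$ is a genuine open embedding $Z\times\mathbb{A}^{1}\hookrightarrow Y$ whose image misses $\Delta=\theta^{-1}_{*}(\Delta')$, yielding the desired sub-cylinder of $Y\setminus\Delta$.

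For part (c), the direction is reversed: I want to push a cylinder $(Z,\varphi)$ forward along $\theta$, and the genuine obstacle is that $\theta$ may contract the cylinder or a positive-dimensional part of it into $\Delta'$ or into the image of $\mathrm{Exc}(\theta)$, so that $\theta\circ\varphi$ need not be an open embedding. This is precisely where the hypotheses that $Z$ is non-uniruled and that every codimension-one component of $\mathrm{Exc}(\theta)$ is uniruled come into play. I would argue that no codimension-one component $E$ of $\mathrm{Exc}(\theta)$ can dominate $Z$ under $\mathrm{pr}_{Z}\circ\varphi^{-1}$: if some $E$ met the cylinder in a divisor, then restricting the projection $Z\times\mathbb{A}^{1}\to Z$ to $E$ would exhibit $Z$ as dominated by the uniruled variety $E$, and since uniruledness is preserved under dominant rational maps between varieties of the same dimension (a general ruling line cannot be contracted to a point as it would force $E$ to have smaller dimension), this contradicts the non-uniruledness of $Z$. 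Hence the intersection of $\mathrm{Exc}(\theta)$ with the image of $\varphi$ is contained in a union of fibers $\{z\}\times\mathbb{A}^{1}$ over a proper closed subset of $Z$ together with possibly some horizontal pieces of codimension $\geq 2$; removing these I obtain an open $Z'\subset Z$ over which $\varphi$ avoids $\mathrm{Exc}(\theta)$ entirely, so $\theta$ restricts to an isomorphism there and $\theta\circ\varphi'$ is an open embedding into $Y'$.

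The final bookkeeping step common to part (c) is to check that the image of the resulting open embedding actually lands in $Y'\setminus\Delta'$ rather than meeting $\Delta'$; here I would note that since $\theta$ is an isomorphism near the shrunk cylinder and $\varphi$ avoids $\Delta$, its image avoids the proper transform $\Delta'$ as well. I expect part (c) to be the main obstacle, because controlling the horizontal versus vertical behaviour of the exceptional divisors with respect to the cylinder projection is the delicate point, and it is exactly the asymmetry between the uniruled exceptional components and the non-uniruled base $Z$ that makes the pushforward of the cylinder possible.
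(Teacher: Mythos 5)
Your proposal follows the paper's own proof in all three parts: in (a) you restrict to the open loci where $\theta$ is an isomorphism, whose complements have codimension at least two, and shrink the base; in (b) you use that the locus over which $\theta^{-1}$ fails to be a morphism is small in $Y'$ and shrink again; in (c) you play the uniruled codimension-one components of $\mathrm{Exc}(\theta)$ against the non-uniruled base $Z$ to conclude that $\mathrm{pr}_{Z}$ cannot be dominant on $\varphi^{-1}(\mathrm{Exc}(\theta))$, and your dimension-count argument that a dominant rational map from a uniruled variety of the same dimension would force $Z$ to be uniruled is exactly the paper's. In substance the proof is correct, and the final bookkeeping (image landing in $Y'\setminus\Delta'$) is also handled as in the paper.

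However, one recurring justification is stated incorrectly, and it sits exactly where the codimension hypotheses do their work. In (a) and (b) you argue that ``a general fiber $\{z\}\times\mathbb{A}^{1}$ meets the isomorphism locus in a dense open subset.'' This is too weak: a fiber meeting $Y_{0}$ densely but not entirely contained in it still obstructs $\theta\circ\varphi$ from being an open embedding on that fiber, and no shrinking of $Z$ repairs it. The correct argument (the paper's) is that the bad locus has codimension at least two, so its preimage under the open embedding $\varphi$ has dimension at most $\dim Z-1$ and therefore cannot dominate $Z$; hence over a dense open $Z'\subset Z$ the \emph{entire} fibers $Z'\times\mathbb{A}^{1}$ avoid it. In (b) you should also record why $\theta(\mathrm{Exc}(\theta))$ has codimension at least two rather than merely being ``lower-dimensional'': this follows from Zariski's main theorem and uses normality of $Y'$, and it matters, since a codimension-one (horizontal) bad locus could not be evaded by shrinking $Z'$. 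Finally, in (c) the ``horizontal pieces of codimension $\geq 2$'' you propose to remove cannot exist: a closed subset of $Z\times\mathbb{A}^{1}$ of codimension at least two has dimension strictly less than $\dim Z$ and so never dominates $Z$; that clause should be replaced by the same non-dominance argument, after which your proof coincides with the paper's.
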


\begin{proof}
If $\theta$ is an isomorphism in codimension one, then it restricts
to an isomorphism $\theta:U\rightarrow U'$ between open subsets $U\subset Y$
and $U'\subset Y'$ whose complements $X$ and $X'$ have codimension
at least $2$ in $Y$ and $Y'$ respectively. Given a cylinder $(Z,\varphi)$
in $Y\setminus\Delta$, the inverse image by $\varphi$ of $X\cap(Y\setminus\Delta)$
has codimension at least two in $Z\times\mathbb{A}^{1}$, hence does
not dominate $Z$. Consequently, there exists a dense open subset
$Z'\subset Z$ such that $(Z',\varphi'=\varphi|_{Z\times\mathbb{A}^{1}})$
is a sub-$\mathbb{A}^{1}$-cylinder of $(Z,\varphi)$ whose image
is contained in $U\cap(Y\setminus\Delta)$, and the composition $(Z',\theta\circ\varphi')$
is then an $\mathbb{A}^{1}$-cylinder in $U'\cap Y'\setminus\Delta'\subset Y'\setminus\Delta'$.
Reversing the roles of $Y$ and $Y'$, this yields a). 

If $\theta$ is a proper morphism, then $C=\theta(\mathrm{Exc}(\theta))$
has codimension at least $2$ in $Y'$ because $Y'$ is normal. So
the restriction of $\mathrm{pr}_{Z'}$ to ${\varphi'}^{-1}(C)$ cannot
be dominant. This guarantees the existence of a dense open subset
$Z$ of $Z'$ such that $(Z,\varphi=\varphi'|_{Z\times\mathbb{A}^{1}})$
is a sub-$\mathbb{A}^{1}$-cylinder of $(Z',\varphi')$ whose image
is contained $Y'\setminus\Delta'\cup C$. Then $(Z,\theta^{-1}\circ\varphi)$
is an $\mathbb{A}^{1}$-cylinder in $Y\setminus\Delta\cup\mathrm{Exc}(\theta)\subset Y\setminus\Delta$,
which proves b).

Finally to prove c), we observe that since $Z$ is not uniruled, the
restriction of $\mathrm{pr}_{Z}$ to the inverse image by $\varphi$
of a uniruled irreducible component of pure codimension one of $\mathrm{Exc}(\theta)$
cannot be dominant. This implies that the restriction of $\mathrm{pr}_{Z}$
to $\varphi^{-1}(\mathrm{Exc}(\theta))$ is not dominant hence that
there exists a dense open subset $Z'\subset Z$ such that $\varphi(Z'\times\mathbb{A}^{1})\subset Y\setminus\Delta\cup\mathrm{Exc}(\theta)$.
Then $(Z',\varphi'=\varphi|_{Z'\times\mathbb{A}^{1}})$ is a sub-$\mathbb{A}^{1}$-cylinder
of $(Z,\varphi)$ with the desired property.
\end{proof}

\subsection{Uniqueness properties of $\mathbb{A}^{1}$-cylinders}

The fact that $\mathbb{P}_{k}^{1}$ contains infinitely many non-equivalent
cylinders $\mathbb{P}_{k}^{1}\setminus\{p\}$ over $k$, parametrized
by the set of its $k$-rational points $p\in\mathbb{P}_{k}^{1}(k)$,
shows that in general a given $k$-variety $X$ can contain many non
equivalent cylinders even when their respective base spaces are non-uniruled.
To ensure some uniqueness property of $\mathbb{A}^{1}$-cylinders,
we are led to introduce the following notion:
\begin{defn}
\label{def:VertMax}Let $f:X\rightarrow S$ be a morphism of schemes.
An $\mathbb{A}^{1}$-cylinder $(Z,\varphi)$ in $X$ over $S$ is
said to be \emph{vertically maximal} in $X$ over $S$ if for every
generic point $\xi$ of $Z$, the open embedding $\xi\times\mathbb{A}^{1}\hookrightarrow X$
induced by $\varphi$ cannot be extended to a morphism $\xi\times\mathbb{P}^{1}\rightarrow X$. 
\end{defn}

\noindent The next result can be thought as another geometric variant
of Iitaka and Fujita strong cancellation theorems \cite{IiFu77}. 
\begin{prop}
\label{prop:UniqueCylinder-vertMax}Let $X$ be $k$-variety containing
a vertically maximal $\mathbb{A}^{1}$-cylinder $(Z,\varphi)$ over
a non uniruled $k$-variety $Z$. Then every $\mathbb{A}^{1}$-cylinder
in $X$ over $k$ is equivalent to $(Z,\varphi)$. 
\end{prop}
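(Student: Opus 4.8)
The plan is to compare the given vertically maximal cylinder $(Z,\varphi)$ with an arbitrary $\mathbb{A}^1$-cylinder $(W,\psi)$ in $X$ at the level of function fields: the non-uniruledness of $Z$ will force the two base projections to agree, and the vertical maximality will then force the two fiber coordinates to differ only by an affine change of variable. We may assume $X$ integral and, since replacing $(W,\psi)$ by a sub-cylinder does not affect the equivalence class, we may shrink $W$ and assume it smooth. Write $U=\varphi(Z\times\mathbb{A}^1)$ and $V=\psi(W\times\mathbb{A}^1)$ for the two cylinders as dense open subsets of $X$, and let $\pi=\mathrm{pr}_Z\circ\varphi^{-1}\colon X\dashrightarrow Z$ and $\rho=\mathrm{pr}_W\circ\psi^{-1}\colon V\to W$ be the associated projections.

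First I would factor $\pi$ through $\rho$. Choosing a projective model $\overline{Z}$ of $Z$, which is again non-uniruled, the composite $\pi\circ\psi\colon W\times\mathbb{A}^1\dashrightarrow\overline{Z}$ is dominant and extends to a dominant rational map $p\colon W\times\mathbb{P}^1\dashrightarrow\overline{Z}$. Since $\mathrm{pr}_W\colon W\times\mathbb{P}^1\to W$ is proper between normal varieties with rationally connected fibers and $\overline{Z}$ is non-uniruled, Lemma \ref{lem:Factorization-Lemma} gives a dominant rational map $q\colon W\dashrightarrow Z$ with $\pi|_V=q\circ\rho$.

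Next I would pass to function fields. Setting $K=k(Z)$ and $L=k(W)$, the two cylinder structures give $k(X)=K(t)=L(s)$, where $t,s$ are the respective $\mathbb{A}^1$-coordinates, while $\pi|_V=q\circ\rho$ translates into inclusions $K\subseteq L\subseteq k(X)$. Since $\dim Z=\dim W=\dim X-1$, comparison of transcendence degrees shows $L/K$ is algebraic; as $K$ is algebraically closed in the rational function field $K(t)=k(X)$, it follows that $L=K$, so $q$ is birational. Thus $s$ is another generator of $K(t)/K$ and hence $s=\frac{\alpha t+\beta}{\gamma t+\delta}$ for some $\alpha,\beta,\gamma,\delta\in K$ with $\alpha\delta-\beta\gamma\neq 0$.

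The decisive step, where the hypothesis enters, is to rule out $\gamma\neq 0$. Consider the generic line $\Lambda_1=\mathrm{Spec}(K[t])$ of the first cylinder; since $q$ is birational, $\rho$ is constant on $\Lambda_1$, equal to the generic point $\eta_W$ of $W$. If $\gamma\neq 0$, then in the chart $V\cong W\times\mathbb{A}^1$ the line $\Lambda_1$ is traced by $t\mapsto\bigl(\eta_W,\frac{\alpha t+\beta}{\gamma t+\delta}\bigr)$; writing $u=1/t$ one sees that the second coordinate extends regularly across $u=0$ with value $\alpha/\gamma\in K$, so the embedding $\Lambda_1\hookrightarrow X$ would extend to a morphism $\mathbb{P}^1_K\to X$ sending the point at infinity to $\psi(\eta_W,\alpha/\gamma)\in V$, contradicting vertical maximality (Definition \ref{def:VertMax}). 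Hence $\gamma=0$ and $s=at+b$ with $a\in K^\ast$, $b\in K$, so $K[t]=K[s]$. Spreading this out over a dense open $Z'\subseteq Z$ on which $a,b$ and $q^{-1}$ are regular with $a$ nowhere vanishing, the map $(z,t)\mapsto(q^{-1}(z),a(z)t+b(z))$ is an open immersion $Z'\times\mathbb{A}^1\hookrightarrow W\times\mathbb{A}^1$ compatible with $\varphi$ and $\psi$, exhibiting $(Z',\varphi|_{Z'\times\mathbb{A}^1})$ as a common sub-$\mathbb{A}^1$-cylinder of $(Z,\varphi)$ and $(W,\psi)$, which is the desired equivalence. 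I expect the function-field bookkeeping to be routine; the genuine obstacle is the last paragraph, namely verifying that when $\gamma\neq 0$ the set-theoretic limit at $t=\infty$ really does define a \emph{morphism} into $X$ in the chart $V$, as this is exactly what turns vertical maximality into the needed constraint $\gamma=0$.
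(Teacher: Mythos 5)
Your proof is correct, and while it rests on the same two pillars as the paper's (non-uniruledness of $Z$ forces the base projections to agree; vertical maximality rules out the point at infinity), the mechanics are genuinely different. The paper argues pointwise and set-theoretically: it first shrinks $Z$ so that the fibers $\varphi(\mathrm{pr}_Z^{-1}(z))\simeq\mathbb{A}^1_{\kappa(z)}$ are \emph{closed in} $X$ (this closedness assertion is exactly where vertical maximality is consumed, though the paper leaves the verification implicit), then on $W=U\cap V$ it factors $\mathrm{pr}_Z\circ\varphi^{-1}$ through $\mathrm{pr}_T\circ\psi^{-1}$ by a direct non-uniruledness argument (rather than invoking Lemma \ref{lem:Factorization-Lemma} as you do), obtaining $\alpha:T_0\to Z_0$, and uses closedness of the $\varphi$-fibers to show each $\psi$-fiber \emph{equals} a $\varphi$-fiber, whence $\psi(T_0\times\mathbb{A}^1)\subset\varphi(Z_0\times\mathbb{A}^1)$, $\alpha$ is an isomorphism, and $(T_0,\psi|_{T_0\times\mathbb{A}^1})$ is a common sub-cylinder. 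You instead work at the generic point: $L=K$ by the transcendence-degree count (using that $K$ is algebraically closed in $K(t)$), so the two fiber coordinates differ by a M\"obius transformation $s=(\alpha t+\beta)/(\gamma t+\delta)$ over $K=k(Z)$, and vertical maximality kills $\gamma$. Your flagged obstacle in the last step is indeed harmless: on a neighborhood $N$ of $u=0$ in $\mathbb{P}^1_K$ the map is the composite of the genuine morphism $\bigl(\eta_W\circ p_N,\,s(u)\bigr):N\to W\times\mathbb{A}^1$ with the open immersion $\psi$ (the first factor makes sense precisely because $L=K$ forces $\rho|_{\Lambda_1}$ to factor through $\mathrm{Spec}(K)\xrightarrow{\eta_W}W$), and it glues with $\varphi|_{\xi\times\mathbb{A}^1}$ since the two morphisms agree on a dense open of a reduced source mapping to the separated $X$. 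Comparing payoffs: your function-field route makes the role of vertical maximality fully explicit (it is hidden in the paper's closedness-of-fibers reduction, which is arguably under-justified there) and yields the sharper conclusion that the transition is affine, $s=at+b$ — an Iitaka--Fujita-type strong cancellation matching the remark preceding the proposition — while the paper's fiberwise route avoids all birational bookkeeping and directly produces the geometric containment of one cylinder in the other. Note finally that both arguments implicitly assume $X$ irreducible (needed for $U\cap V\neq\emptyset$; the statement fails for disconnected $X$), which your explicit reduction to $X$ integral at the outset makes honest.
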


\begin{proof}
Let $U=\varphi(Z\times\mathbb{A}^{1})$ be the open image of $Z\times\mathbb{A}^{1}$
in $X$. By shrinking $Z$ if necessary, we can assume that $Z$ is
affine and that all fibers of the projection $\mathrm{pr}_{Z}\circ\varphi^{-1}:U\rightarrow Z$
are closed in $X$. Let $(T,\psi)$ be another cylinder in $X$ with
image $V=\psi(T\times\mathbb{A}^{1})$, let $W=U\cap V$ and let $Z_{0}$
and $T_{0}$ be the open images of $W$ in $Z$ and $T$ by the morphisms
$\mathrm{pr}_{Z}\circ\varphi^{-1}$ and $\mathrm{pr}_{T}\circ\psi^{-1}$
respectively. Since $\mathrm{pr}_{T}\circ\psi^{-1}:W\rightarrow T_{0}$
is a surjective morphism with uniruled fibers and $Z$, whence $Z_{0}$,
is not uniruled, there exists a unique surjective morphism $\alpha:T_{0}\rightarrow Z_{0}$
such that $\mathrm{pr}_{Z}\circ\varphi^{-1}:W\rightarrow Z_{0}$ factors
as $\mathrm{pr}_{Z}\circ\varphi^{-1}=\alpha\circ(\mathrm{pr}_{T}\circ\psi^{-1}):W\rightarrow T_{0}\rightarrow Z_{0}$.
So for every point $t\in T_{0}$, there exists a unique $z=\alpha(t)\in Z_{0}$
such that $\psi(\mathrm{pr}_{T}^{-1}(t))\cap U$ is equal to $\varphi(\mathrm{pr}_{Z}^{-1}(z))\cap V$.
Since by hypothesis $\varphi(\mathrm{pr}_{Z}^{-1}(z))\simeq\mathbb{A}_{\kappa(z)}^{1}$
is closed in $X$, it follows that $\psi(\mathrm{pr}_{T}^{-1}(t))=\varphi(\mathrm{pr}_{Z}^{-1}(z))$.
This implies in turn that $\psi(T_{0}\times\mathbb{A}^{1})\subset\varphi(Z_{0}\times\mathbb{A}^{1})$
and that we have a commutative diagram: \[\begin{tikzcd} \psi(T_{0}\times\mathbb{A}^{1})   \arrow[r,hook] \arrow[d,swap,"\mathrm{pr}_{T}\circ\psi^{-1}"] & \varphi(Z_{0}\times\mathbb{A}^{1})  \arrow[d,"\mathrm{pr}_{Z}\circ\varphi^{-1}"] \arrow[r,hook] & U \arrow[d] \\ T_0 \arrow[r,"\alpha"] & Z_0 \arrow[r,hook] & Z. \end{tikzcd}\]It
follows in particular that $\alpha$ is also injective, hence an isomorphism.
Thus $(T_{0},\psi|_{T_{0}\times\mathbb{A}})$ is a sub-$\mathbb{A}^{1}$-cylinder
of $(Z,\varphi)$, which shows that $(Z,\varphi)$ and $(T,\psi)$
are equivalent. 
\end{proof}

\section{Proof of Theorem \ref{thm:BaseChangeThm}}

We now proceed to the proof of Theorem \ref{thm:BaseChangeThm}. By
hypothesis, $f:X\rightarrow S$ is a dominant morphism between geometrically
normal algebraic varieties defined over an uncountable field $k$
of characteristic zero, with the property that for general closed
points $s\in S$, the fiber $X_{s}$ contains a cylinder $(Z_{s},\varphi_{s})$
over a $\kappa(s)$-variety $Z_{s}$. Letting $X_{\eta}$ be the fiber
of $f$ over the generic point $\eta$ of $S$, the existence of an
\'etale morphism $T\rightarrow S$ such that $X\times_{S}T$ is $\mathbb{A}^{1}$-cylindrical
over $T$, is equivalent to that of a finite extension $L\subset L'$
of the function field $L$ of $S$ such that $X_{\eta}\times_{\mathrm{Spec}(L)}\mathrm{Spec}(L')$
is $\mathbb{A}^{1}$-cylindrical over $L'$. In fact, the following
specialization lemma implies that it is enough to find any extension
$L'$ of $L$ for which $X_{\eta}\times_{\mathrm{Spec}(L)}\mathrm{Spec}(L')$
is $\mathbb{A}^{1}$-cylindrical over $L'$:
\begin{lem}
\label{lem:CylinderafterFinite extension} Let $X$ be a variety defined
over a field $k$ of characteristic zero and let $k\subset K$ be
any field extension. If $X_{K}$ is $\mathbb{A}^{1}$-cylindrical
over $K$ then there exists a finite extension $k\subset k'$ such
that $X_{k'}$ is $\mathbb{A}^{1}$-cylindrical over $k'$. 
\end{lem}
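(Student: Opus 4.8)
The plan is to exploit the fact that $\mathbb{A}^{1}$-cylindricity is a finitely presented property: although the extension $K/k$ may be transcendental and even uncountably generated, the cylinder witnessing $\mathbb{A}^{1}$-cylindricity of $X_{K}$ only involves finitely many elements of $K$. So I would first descend the cylinder to a finitely generated subextension of $K/k$, and then specialize that subextension to a finite one. The crucial arithmetic input, used only at the very end, is that a closed point of a variety of finite type over a field has residue field \emph{finite} over that field (Zariski's lemma).

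First I would write $K$ as the filtered union of its finitely generated sub-$k$-extensions $k\subseteq k_{\alpha}\subseteq K$, so that $\mathrm{Spec}(K)$ is the cofiltered limit of the $\mathrm{Spec}(k_{\alpha})$ with affine transition morphisms, and correspondingly $X_{K}$ is the limit of the $X_{k_{\alpha}}$. An $\mathbb{A}^{1}$-cylinder in $X_{K}$ over $K$ amounts to an open subscheme $U\subseteq X_{K}$ together with an isomorphism $U\simeq Z\times\mathbb{A}^{1}$ of $K$-schemes, and all of this is of finite presentation over $\mathrm{Spec}(K)$. By the standard spreading-out results for finitely presented schemes and morphisms along cofiltered limits of affine schemes (Grothendieck's EGA IV, \S\,8) --- open immersions, the formation of products with $\mathbb{A}^{1}$, and isomorphisms all descend --- there is an index giving a finitely generated subextension $k\subseteq k_{0}\subseteq K$ over which the entire cylinder is already defined. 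Thus $X_{k_{0}}$ carries an $\mathbb{A}^{1}$-cylinder $(Z_{0},\varphi_{0})$ whose base change to $K$ recovers $(Z,\varphi)$.

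Next I would realize $k_{0}$ geometrically: being finitely generated over $k$, it is the function field of an integral affine $k$-variety $V=\mathrm{Spec}(A)$ with $A$ a finitely generated $k$-domain and $\mathrm{Frac}(A)=k_{0}$. Then $X_{k_{0}}$ is the generic fiber of the first projection $\mathrm{pr}_{V}:X\times_{k}V\rightarrow V$, and $(Z_{0},\varphi_{0})$ is the restriction to the generic point of $V$ of a relative cylinder. A second application of spreading-out, this time over the base $V$, produces a dense open subset $V_{0}\subseteq V$, a $V_{0}$-scheme $\mathcal{Z}$, and an open embedding $\mathcal{Z}\times\mathbb{A}^{1}\hookrightarrow X\times_{k}V_{0}$ of $V_{0}$-schemes restricting to $(Z_{0},\varphi_{0})$ over the generic point of $V$. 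After further shrinking $V_{0}$ I may assume that $\mathcal{Z}\rightarrow V_{0}$ is surjective.

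Finally I would specialize. Choosing any closed point $v\in V_{0}$, its residue field $k'=\kappa(v)$ is a finite extension of $k$ because $V$ is of finite type over the field $k$. Restricting the relative cylinder to the fiber over $v$, and using that open immersions and the formation of $\mathcal{Z}\times\mathbb{A}^{1}$ are stable under the base change $\mathrm{Spec}(k')\rightarrow V_{0}$, yields an open embedding $\mathcal{Z}_{v}\times\mathbb{A}^{1}\hookrightarrow X_{k'}$ with $\mathcal{Z}_{v}$ nonempty by surjectivity. Hence $X_{k'}$ is $\mathbb{A}^{1}$-cylindrical over the finite extension $k'$, as desired. I expect the only genuinely delicate point to be the bookkeeping in the two spreading-out steps: one must ensure that it is the isomorphism encoding the product structure --- not merely the underlying open set --- that descends from $K$ to $k_{0}$ and then specializes along $V_{0}$. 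This is exactly what the finite-presentation machinery guarantees, and everything else is formal.
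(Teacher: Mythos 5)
Your proposal is correct and follows essentially the same strategy as the paper's proof: descend the cylinder to a finitely generated subextension, realize that subextension as the function field of a $k$-variety, spread the cylinder out over a dense open subset of that variety, and specialize at a closed point whose residue field is finite over $k$. The paper merely phrases the two spreading-out steps more concretely (via the closures $\Delta$ and $T$ and a rational map with generic fiber $\mathbb{A}^{1}$) where you invoke the EGA IV, \S\,8 limit machinery, but the arguments are the same.
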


\begin{proof}
By hypothesis, there exists an open embedding $\varphi:Z\times\mathbb{A}^{1}\hookrightarrow X_{K}$
for some $K$-variety $Z$. This open embedding is defined over a
finitely generated sub-extension $L$ of $K$, i.e. there exists an
open embedding $\varphi_{0}:Z_{0}\times\mathbb{A}^{1}\hookrightarrow X_{L}$
of $L$-varieties such that $\varphi$ is obtained from $\varphi_{0}$
by the base change $\mathrm{Spec}(K)\rightarrow\mathrm{Spec}(L)$.
Being finitely generated over $k$, $L$ is the function field of
an algebraic variety $S$ defined over $k$ and we can therefore view
$X_{L}$ as the fiber $\mathfrak{X}_{\eta}$ of the projection $\mathrm{pr}_{S}:\mathfrak{X}=X\times S\rightarrow S$
over the generic point $\eta$ of $S$. Let $\Delta$ and $T$ be
the respective closures of $\mathfrak{X}_{\eta}\setminus\varphi_{0}(Z_{0}\times\mathbb{A}^{1})$
and $\varphi_{0}(Z_{0}\times\{0\})$ in $\mathfrak{X}$. The projection
$\mathrm{pr}_{Z_{0}}:Z_{0}\times\mathbb{A}^{1}\rightarrow Z_{0}$
induces a rational map $\rho:\mathfrak{X}\setminus\Delta\dashrightarrow T$
whose generic fiber is isomorphic to $\mathbb{A}^{1}$ over the function
field of $T$. It follows that there exists an open subset $Y\subset T$
over which $\rho$ is regular and whose inverse image $V=\rho^{-1}(Y)$
is isomorphic to $Y\times\mathbb{A}^{1}$. Now for a general closed
point $s\in S$, the fiber $\mathfrak{X}_{s}$ of $\mathrm{pr}_{S}$
over $s$ is isomorphic to $X_{\kappa(s)}$, where $\kappa(s)$ denotes
the residue field of $s$, and contains an open subset $V_{s}$ isomorphic
to $Y_{s}\times\mathbb{A}^{1}$. The induced open immersion $Y_{s}\times\mathbb{A}^{1}\hookrightarrow X_{\kappa(s)}$
provides the desired $\mathbb{A}^{1}$-cylinder over the finite extension
$\kappa(s)$ of $k$. 
\end{proof}
Let $\overline{k}$ be an algebraic closure of $k$ and let $f_{\overline{k}}:X_{\overline{k}}\rightarrow S_{\overline{k}}$
be the morphism obtained by the base extension $\mathrm{Spec}(\overline{k})\rightarrow\mathrm{Spec}(k)$.
Since $S$ is geometrically integral, $S_{\overline{k}}$ is integral
and its field of functions $\overline{k}(S_{\overline{k}})$ is an
extension of the field of functions $L$ of $S$. If the generic fiber
of $f_{\overline{k}}$ becomes $\mathbb{A}^{1}$-cylindrical after
the base change to some extension of $\overline{k}(S_{\overline{k}})$
then by the previous lemma, the generic fiber $X_{\eta}$ of $f:X\rightarrow S$
becomes $\mathbb{A}^{1}$-cylindrical after the base change to a finite
extension of $L$. We can therefore assume from the very beginning
that $k=\overline{k}$ is an uncountable algebraically closed field
of characteristic zero. Up to shrinking $S$, we can further assume
without loss of generality that it is affine and that for every closed
point $s$ in $S$, $X_{s}$ contains a cylinder $(Z_{s},\varphi_{s})$
over a $k$-variety $Z_{s}$. Since $X$ and $S$ are $k$-varieties,
there exists a subfield $k_{0}\subset k$ of finite transcendence
degree over $\mathbb{Q}$ such that $f:X\rightarrow S$ is defined
over $k_{0}$, i.e. there exists a morphism of $k_{0}$-varieties
$f_{0}:X_{0}\rightarrow S_{0}$ and a commutative diagram \[\begin{tikzcd} X \arrow[d,swap,"f"] \arrow[r] & X_0 \arrow[d, "f_0"] \\ S \arrow[d] \arrow[r] & S_0 \arrow[d] \\   \mathrm{Spec}(k) \arrow[r] &  \mathrm{Spec}(k_{0})\end{tikzcd}\]in
which each square is cartesian. The field of functions $L_{0}=k_{0}(S_{0})$
of $S_{0}$ is an extension of $k_{0}$ of finite transcendence degree
over $\mathbb{Q}$, and since $k$ is uncountable and algebraically
closed, there exists a $k_{0}$-embedding $i:L_{0}\hookrightarrow k$
of $L_{0}$ in $k$. Letting $(X_{0})_{\eta_{0}}$ be the fiber of
$f_{0}$ over the generic point $\eta_{0}:\mathrm{Spec}(L_{0})\rightarrow S_{0}$
of $S_{0}$, the composition $\Gamma(S_{0},\mathcal{O}_{S_{0}})\hookrightarrow L_{0}\hookrightarrow k$
induces a $k$-homomorphism $\Gamma(S_{0},\mathcal{O}_{S_{0}})\otimes_{k_{0}}k\rightarrow k$
defining a closed point $s:\mathrm{Spec}(k)\rightarrow\mathrm{Spec}(\Gamma(S_{0},\mathcal{O}_{S_{0}})\otimes_{k_{0}}k)=S$
of $S$ for which we obtain the following commutative diagram 

\[\xymatrix@!=16pt{ & X_s \ar[dl] \ar@{->}'[d][dd] \ar[rr] & & X \ar[dl] \ar[dd]_(0.5){f} \\ (X_0)_{\eta_0} \ar[rr] \ar[dd] & & X_0 \ar[dd]_(0.4){f_0} \\ & \mathrm{Spec}(k) \ar[dl]_{i^*} \ar@{->}'[r][rr]^{s} & & S \ar[r] \ar[dl]  & \mathrm{Spec}(k) \ar[dl] \\ \mathrm{Spec}(L_0) \ar[rr]^{\eta_0} & & S_0 \ar[r]  & \mathrm{Spec}(k_0). &  }\]  Since
the bottom square of the cube above is cartesian by construction,
we have 
\[
(X_{0})_{\eta_{0}}\times_{\mathrm{Spec}(L_{0})}\mathrm{Spec}(k)\simeq X_{0}\times_{S_{0}}\mathrm{Spec}(k)\simeq X\times_{S}\mathrm{Spec}(k)=X_{s}.
\]
Since by hypothesis $X_{s}$ is $\mathbb{A}^{1}$-cylindrical over
$k$, we conclude that $(X_{0})_{\eta_{0}}\times_{\mathrm{Spec}(L_{0})}\mathrm{Spec}(k)$
is $\mathbb{A}^{1}$-cylindrical over $k$. Lemma \ref{lem:CylinderafterFinite extension}
then guarantees that there exists a finite extension $L_{0}\subset L_{0}'$
such that $(X_{0})_{\eta_{0}}\times_{\mathrm{Spec}(L_{0})}\mathrm{Spec}(L_{0}')$
is $\mathbb{A}^{1}$-cylindrical over $L_{0}'$. Finally, the tensor
product $L\otimes_{L_{0}}L_{0}'$ decomposes as a direct product of
finitely many finite extensions $L'$ of $L$ with the property that
$X_{\eta}\times_{\mathrm{Spec}(L)}\mathrm{Spec}(L')$ is $\mathbb{A}^{1}$-cylindrical
over $L'$, which completes the proof of Theorem \ref{thm:BaseChangeThm}. 
\begin{rem}
Combined with Proposition \ref{prop:UniqueCylinder-vertMax}, Lemma
\ref{lem:CylinderafterFinite extension} implies that for a $k$-variety
$X$, the property of containing a vertically maximal $\mathbb{A}^{1}$-cylinder
over a non uniruled variety is independent of the base field. Indeed,
by Lemma \ref{lem:CylinderafterFinite extension} if $X_{K}$ contains
a cylinder for some arbitrary field extension $k\subset K$, then
$X_{k'}$ contains a cylinder for a finite extension $k\subset k'$.
Letting $k''$ be the Galois closure of the extension $k\subset k'$
in an algebraic closure of $k'$, Proposition \ref{prop:UniqueCylinder-vertMax}
implies that the translates of a given cylinder $(Z,\varphi)$ in
$X_{k''}$ over $k''$ by the action of the Galois group $G=\mathrm{Gal}(k''/k)$
are all equivalent. Since $G$ is a finite group, it follows that
there exists a dense affine open subset $Z_{0}$ of $Z$, an action
of $G$ on $Z_{0}$ lifting to a $G$-action on $Z_{0}\times\mathbb{A}^{1}$
such that the induced open embedding $(Z_{0},\varphi|_{Z_{0}\times\mathbb{A}^{1}})\hookrightarrow X_{k''}$
is $G$-equivariant. The quotients $(Z_{0}\times\mathbb{A}^{1})/G$
and $Z_{0}/G$ are then affine varieties defined over $k$ while the
projection $Z_{0}\times\mathbb{A}^{1}\rightarrow Z_{0}$ and the open
embedding $\varphi|_{Z_{0}\times\mathbb{A}^{1}}:Z_{0}\times\mathbb{A}^{1}\hookrightarrow X_{k''}$
descend respectively to a locally trivial $\mathbb{A}^{1}$-bundle
$\pi:(Z_{0}\times\mathbb{A}^{1})/G\rightarrow Z_{0}/G$ and an open
embedding $\psi:(Z_{0}\times\mathbb{A}^{1})/G\hookrightarrow X_{k''}/G\simeq X$.
A cylinder in $X$ over $k$ is then obtained by restricting $\psi$
to the inverse image of a dense open subset of $Z_{0}/G$ over which
$\pi$ is a trivial $\mathbb{A}^{1}$-bundle. 
\end{rem}

\section{Proof of Theorem \ref{thm:MainThm}}

We first consider the case where $f:X\rightarrow S$ is a smooth projective
morphism whose general closed fibers contain vertically maximal $\mathbb{A}^{1}$-cylinders
over non uniruled varieties. The case of an arbitrary morphism $f:X\rightarrow S$
between normal algebraic varieties is then deduced by considering
a suitably constructed smooth relative projective model of $X$ over
$S$. 

\subsection{Case of a smooth projective morphism }
\begin{prop}
\label{prop:Relative-MMP} Let $\overline{f}:Y\rightarrow S$ be a
smooth projective morphism between normal $k$-varieties and let $\Delta\subset Y$
be a divisor on $Y$ such that for a general closed point $s\in S$,
$Y_{s}\setminus\Delta_{s}$ contains an $\mathbb{A}^{1}$-cylinder
$(Z_{s},\varphi_{s})$ over a non uniruled $\kappa(s)$-variety $Z_{s}$.
Then there exists a $K_{Y}$-MMP $\theta:Y\dashrightarrow Y'$ relative
to $\overline{f}:Y\rightarrow S$ whose output $\overline{f}':Y'\rightarrow S$
has the structure of a Mori conic bundle $\rho':Y'\rightarrow T$
over a non uniruled normal $S$-variety $h:T\rightarrow S$. Furthermore,
for a general closed point $s\in S$, there exists a sub-cylinder
$(Z_{s}',\varphi'_{s})$ of $(Z_{s},\varphi_{s})$ and a commutative
diagram \[\begin{tikzcd}[column sep=large] Z'_{s}\times\mathbb{A}^{1}  \arrow[d,swap,"\mathrm{pr}_{Z'_{s}}"] \arrow[r,"\theta_{s}\circ\varphi'_{s}"] & Y_s'\arrow[d, "\rho'_s"] \\ Z'_s  \arrow[r,hook, "\alpha_s" ] & T_s\end{tikzcd}\]where
the top and bottom arrows are open embeddings. 
\end{prop}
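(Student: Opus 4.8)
The plan is to run a relative minimal model program on $Y$ over $S$ and to carry the fiberwise cylinders along as passengers, exploiting that the exceptional loci of a $K_Y$-MMP are uniruled whereas the cylinder bases are not. First I would note that a general closed fiber $Y_s$ is uniruled, since it contains an $\mathbb{A}^{1}$-cylinder; hence $Y$ is uniruled over $S$ and $K_{Y_{\overline{k}}}$ fails to be pseudo-effective relative to $\overline{f}_{\overline{k}}$. This is precisely the input required by the $G$-equivariant relative program recalled in Subsection \ref{subsec:Minimal-Model-Program}, which yields a sequence $\theta:Y\dashrightarrow Y'$ of $K_Y$-negative divisorial contractions and flips, between $k$-varieties that are $\mathbb{Q}$-factorial over $k$ and projective over $S$, followed by an extremal contraction of fiber type $\rho':Y'\rightarrow T$ for which $-K_{Y'}$ is $\rho'$-ample and the relative Picard number equals one. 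Up to shrinking $S$, I would arrange that $\theta$ restricts over a general closed point $s\in S$ to a $K_{Y_s}$-MMP $\theta_s:Y_s\dashrightarrow Y_s'$ and $\rho'$ to $\rho_s':Y_s'\rightarrow T_s$, the contracted relative extremal rays being represented by rational curves contained in the fibers of $\overline{f}$.

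I would then transport the cylinder $(Z_s,\varphi_s)$ across $\theta_s$ one step at a time, following at each stage the proper transform of $\Delta_s$. A flip is an isomorphism in codimension one, so Lemma \ref{lem:Cylinder-birational-pullback}(a) carries a sub-cylinder through it. A divisorial contraction is a proper morphism whose exceptional divisor is covered by the $K$-negative rational curves it contracts, hence uniruled; as the cylinder base is an open subset of the non-uniruled variety $Z_s$, Lemma \ref{lem:Cylinder-birational-pullback}(c) pushes a sub-cylinder forward across it. Composing the finitely many steps produces, for general $s$, a sub-cylinder $(Z_s',\varphi_s')$ of $(Z_s,\varphi_s)$ whose image under $\theta_s$ is an $\mathbb{A}^{1}$-cylinder $Z_s'\times\mathbb{A}^{1}\hookrightarrow Y_s'$ over the still non-uniruled base $Z_s'$.

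Finally I would feed this surviving cylinder into Lemma \ref{lem:Factorization-MFS}. The restriction $\rho_s':Y_s'\rightarrow T_s$ is a surjective proper morphism of normal varieties whose general fibers are irreducible and, being Fano (as $-K_{Y_s'}$ is $\rho_s'$-ample of relative Picard number one), rationally connected; moreover $Y_s'$ contains the cylinder $\theta_s\circ\varphi_s'$ over the non-uniruled base $Z_s'$. Lemma \ref{lem:Factorization-MFS} then forces $\rho_s'$ to be a $\mathbb{P}^{1}$-fibration, produces after one more shrinking of $Z_s'$ the asserted commutative square with an open embedding $\alpha_s:Z_s'\hookrightarrow T_s$, and shows that $T_s$ is not uniruled. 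Since $\dim T_s=\dim Y_s'-1$ for general $s$, the fibration $\rho'$ has relative dimension one, and together with the $\rho'$-ampleness of $-K_{Y'}$ and relative Picard number one this exhibits $\rho':Y'\rightarrow T$ as a Mori conic bundle; the non-uniruledness of the general fibers $T_s$ shows that $h:T\rightarrow S$ is not uniruled over $S$.

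The step I expect to be the main obstacle is the compatibility of the relative program with restriction to general fibers: one must check that, after shrinking $S$, the global steps of $\theta$ and the contraction $\rho'$ genuinely specialize over a general $s$ to a $K_{Y_s}$-MMP and its Mori fiber space output, so that the exceptional loci stay uniruled fiberwise and Lemma \ref{lem:Factorization-MFS} is applicable to $\rho_s'$. It is this specialization that licenses the purely fiberwise transport and factorization arguments, the existence of the program itself being guaranteed by Subsection \ref{subsec:Minimal-Model-Program} once uniruledness over $S$ has been recorded.
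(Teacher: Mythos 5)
Your proposal is correct and follows essentially the same route as the paper: non-pseudo-effectivity of $K_{Y}$ over $S$, the relative MMP of Subsection \ref{subsec:Minimal-Model-Program} ending in a Mori fiber space $\rho':Y'\rightarrow T$, fiberwise transport of the cylinder through Lemma \ref{lem:Cylinder-birational-pullback} a) and c) using non-uniruledness of $Z_{s}$, and conclusion via Lemma \ref{lem:Factorization-MFS}. The only divergence is in the justifications: the paper obtains uniruledness of the codimension-one exceptional components of $\theta_{s}$ from \cite[Corollary 1.7]{HM} and rational connectedness of the general fibers of $\rho'$ from \cite[Corollaries 1.4 and 1.8]{HM} (these fibers inherit terminal singularities from $Y'$, a point your ``Fano hence rationally connected'' gloss leaves implicit), so it never needs the fiberwise restrictions to be genuine MMP steps or a fiberwise Mori fiber space --- which dissolves the specialization obstacle you single out at the end.
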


\begin{proof}
Since the general fibers of $\overline{f}:Y\rightarrow S$ are in
particular uniruled, it follows that $K_{Y}$ is not $\overline{f}$-pseudo-effective.
By virtue of \cite[Corollary 1.3.3]{BCHM} (see $\S$ \ref{subsec:Minimal-Model-Program}),
there exists a $K_{Y}$-mmp $\theta:Y\dashrightarrow Y'$ relative
to $\overline{f}:Y\rightarrow S$ whose output $\overline{f}':Y'\rightarrow S$
has the structure of a Mori fiber space $\rho':Y'\rightarrow T$ over
some normal $S$-variety $h:T\rightarrow S$. Since for a general
closed point $s\in S$ the restriction $\theta_{s}:Y_{s}\dashrightarrow Y_{s}'$
of $\theta$ is a part of a $K_{Y_{s}}$-mmp ran from the smooth projective
variety $Y_{s}$, it follows from \cite[Corollary 1.7]{HM} that every
irreducible component of pure codimension one of the exceptional locus
of $\theta_{s}$ is uniruled. Since $\theta_{s}$ is a composition
of divisorial contractions and isomorphisms in codimension one, we
deduce from Lemma \ref{lem:Cylinder-birational-pullback} a) and c)
that there exists a sub-cylinder $(Z_{s}',\varphi'_{s})$ of $(Z_{s},\varphi_{s})$
such that $(Z_{s}',\theta_{s}\circ\varphi'_{s})$ is an $\mathbb{A}^{1}$-cylinder
in $Y_{s}'$. Since $Y'$ has terminal singularities and $-K_{Y'}$
is $\rho'$-ample, we deduce from \cite[Corollary 1.4]{HM} that every
fiber of $\rho'$ is rationally chain connected. Since a general closed
fiber of $\rho'$ has again terminal singularities, we deduce in turn
from \cite[Corollary 1.8]{HM} that it is in fact rationally connected.
The assertion then follows from Lemma \ref{lem:Factorization-MFS}. 
\end{proof}
\begin{lem}
\label{lem:MMP-Cylinder} In the setting of Proposition \ref{prop:Relative-MMP},
suppose further that for a general closed point $s\in S$, the $\mathbb{A}^{1}$-cylinder
$(Z_{s},\varphi_{s})$ in $Y_{s}\setminus\Delta_{s}$ is maximally
vertical. Then $Y\setminus\Delta$ is $\mathbb{A}^{1}$-cylindrical
over $S$. 
\end{lem}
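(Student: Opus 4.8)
The plan is to use the Mori conic bundle $\rho':Y'\rightarrow T$ furnished by Proposition \ref{prop:Relative-MMP} to endow $Y'\setminus\Delta'$ with a relative $\mathbb{A}^{1}$-cylinder over $S$, where $\Delta'=\theta_{*}(\Delta)$ is the proper transform of $\Delta$ under the $K_{Y}$-mmp $\theta:Y\dashrightarrow Y'$, and then to pull this cylinder back to $Y\setminus\Delta$ by repeated application of Lemma \ref{lem:Cylinder-birational-pullback}. The role of the vertical maximality hypothesis is to force the ``section at infinity'' of the fiberwise cylinders to be supported on $\Delta'$ and to organize these fiberwise sections into a single rational section of $\rho'$ over $T$; this is exactly what allows the fiberwise cylinders to be glued into a global relative one.

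The crux is the following. Fix a general closed point $s\in S$ and a generic point $\xi$ of $Z_{s}$, and consider the curve $\varphi_{s}(\xi\times\mathbb{A}^{1})\subset Y_{s}\setminus\Delta_{s}$. As $Y_{s}$ is proper, the morphism $\xi\times\mathbb{A}^{1}\rightarrow Y_{s}$ extends uniquely to a morphism $\xi\times\mathbb{P}^{1}\rightarrow Y_{s}$; denote by $q_{\xi}$ the image of the point at infinity. By definition, vertical maximality of $(Z_{s},\varphi_{s})$ in $Y_{s}\setminus\Delta_{s}$ means precisely that this extension does not factor through $Y_{s}\setminus\Delta_{s}$, that is, $q_{\xi}\in\Delta_{s}$. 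After shrinking $Z_{s}'$ so that the sub-cylinder $(Z_{s}',\theta_{s}\circ\varphi_{s}')$ of Proposition \ref{prop:Relative-MMP} lands in $Y_{s}'\setminus\Delta_{s}'$, the fiber of $\rho_{s}'$ through this $\mathbb{A}^{1}$ is the $\mathbb{P}^{1}$-completion of $\theta_{s}\circ\varphi_{s}'(\xi\times\mathbb{A}^{1})$, and its point at infinity is the image $\theta_{s}(q_{\xi})$, which lies on $\Delta_{s}'=\theta_{s*}(\Delta_{s})$ because $q_{\xi}\in\Delta_{s}$. Hence the section at infinity $\Sigma_{s}$ of $\rho_{s}'$ over $\alpha_{s}(Z_{s}')$ is contained in $\Delta_{s}'$. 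Letting $s$ vary, this property is constructible in $s$, so I would extract a single irreducible component $\Delta_{0}'$ of $\Delta'$ that dominates $T$, restricts to $\Sigma_{s}$ over a general fiber, and is such that $\rho'|_{\Delta_{0}'}:\Delta_{0}'\rightarrow T$ is generically one-to-one; the inverse of this birational morphism is the desired rational section $\Sigma:T\dashrightarrow\Delta_{0}'\subset Y'$ of $\rho'$.

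Granting the section, $\rho':Y'\rightarrow T$ is a $\mathbb{P}^{1}$-fibration admitting a rational section, so by the discussion preceding Lemma \ref{lem:Factorization-MFS} there is a dense open subset $Z\subset T$, defined over a dense open subset of $S$, with ${\rho'}^{-1}(Z)\simeq Z\times\mathbb{P}^{1}$ and $\Sigma\cap{\rho'}^{-1}(Z)\simeq Z\times\{\infty\}$, whence ${\rho'}^{-1}(Z)\setminus\Sigma\simeq Z\times\mathbb{A}^{1}$ is a relative cylinder over $S$. To see that it avoids all of $\Delta'$ and not merely $\Sigma$, note that on a general fiber the cylinder $\theta_{s}\circ\varphi_{s}'(Z_{s}'\times\mathbb{A}^{1})$ misses $\Delta_{s}'$, so that $\Delta_{s}'\cap{\rho_{s}'}^{-1}(\alpha_{s}(Z_{s}'))=\Sigma_{s}$; shrinking $Z$ and $S$ accordingly gives $\Delta'\cap{\rho'}^{-1}(Z)=Z\times\{\infty\}$, and therefore $Z\times\mathbb{A}^{1}={\rho'}^{-1}(Z)\setminus\Sigma$ is a relative $\mathbb{A}^{1}$-cylinder in $Y'\setminus\Delta'$ over $S$.

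Finally, I would transport this cylinder back to $Y\setminus\Delta$. The birational map $\theta:Y\dashrightarrow Y'$ is a finite composition of flips, which are isomorphisms in codimension one, and of divisorial contractions, which are proper morphisms, all defined over $S$. Traversing these steps in reverse order and applying part a) of Lemma \ref{lem:Cylinder-birational-pullback} at each flip and part b) at each divisorial contraction, in each case compatibly with the $S$-structure, produces a relative $\mathbb{A}^{1}$-cylinder in $Y\setminus\Delta$ over $S$, which is the assertion. The main obstacle is the crux of the second paragraph: promoting the fiberwise consequence of vertical maximality to an honest rational section of $\rho'$ over the generic point of $T$. This requires controlling the indeterminacy locus of $\theta_{s}$ along the curves $\overline{\varphi_{s}(\xi\times\mathbb{A}^{1})}$ for general $s$, so that $q_{\xi}$ genuinely maps into $\Delta_{s}'$, and verifying that the fiberwise sections at infinity patch together into one horizontal component $\Delta_{0}'$ of $\Delta'$ rather than jumping between components as $s$ varies.
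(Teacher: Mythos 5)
Your overall strategy---a rational section at infinity supported on the boundary, trivialization of the conic bundle along it, then transport back through the MMP steps via Lemma \ref{lem:Cylinder-birational-pullback}---is the right shape, and your translation of vertical maximality into ``the point at infinity $q_{\xi}$ of the completed generic cylinder fiber lies in $\Delta_{s}$'' is correct. But the step you yourself flag as the main obstacle is a genuine gap, and your sketch of how to close it (``controlling the indeterminacy locus of $\theta_{s}$'') is not the missing idea. The assertion that the point at infinity of the completed fiber of $\rho'_{s}$ is $\theta_{s}(q_{\xi})$ and lies on $\Delta'_{s}=\theta_{s*}(\Delta_{s})$ fails as stated for two reasons: $\theta_{s}$ need not be defined at $q_{\xi}$, and, worse, $q_{\xi}$ may sit on an irreducible component of $\Delta_{s}$ that is contracted by one of the divisorial contractions composing $\theta_{s}$, in which case its image lies on a center of codimension at least two which is \emph{not} contained in the proper transform $\theta_{s*}(\Delta_{s})$; your section at infinity is then simply not supported on $\Delta'$. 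The subsequent ``constructible in $s$, extract one component $\Delta'_{0}$'' patching step is likewise only a hope, since nothing in your argument prevents the fiberwise sections from jumping between components as $s$ varies.

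The paper closes this gap by removing $\theta_{s}$ from the picture instead of pushing points through it, and the mechanism is the interplay between uniruledness and non-uniruledness, which never appears in your proposal. Every divisor contracted by $\theta_{s}$ is uniruled by \cite[Corollary 1.7]{HM} (already invoked in the proof of Proposition \ref{prop:Relative-MMP}), hence cannot dominate the non-uniruled $Z'_{s}$, and the flipped loci have codimension at least two, hence cannot dominate a base of dimension $\dim Y_{s}-1$ either. So after shrinking $Z'_{s}$, the map $\theta_{s}^{-1}$ restricts to an isomorphism of ${\rho'_{s}}^{-1}(Z'_{s})$ onto an open subset $V_{s}\subset Y_{s}$, and $\rho'_{s}\circ\theta_{s}|_{V_{s}}:V_{s}\rightarrow Z'_{s}$ is a \emph{proper} $\mathbb{P}^{1}$-fibration inside $Y_{s}$ itself extending $\mathrm{pr}_{Z'_{s}}\circ{\varphi'_{s}}^{-1}$. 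Vertical maximality is then applied directly to the fibers of this new fibration: the unique point of each $\mathbb{P}^{1}$-fiber outside the cylinder must lie in $\Delta_{s}$, so $\Delta_{0,s}\cap V_{s}$ is a section of $V_{s}\rightarrow Z'_{s}$, where $\Delta_{0}$ is the horizontal part of $\Delta$ on $Y$. Since the section lives inside the components of $\Delta$ itself, relatively over $S$, no pushforward and no patching argument is needed: one deduces at once that $\Delta_{0}$ is irreducible and that there is a dense open $T_{0}\subset T$ with $(\rho'\circ\theta)^{-1}(T_{0})\simeq T_{0}\times\mathbb{P}^{1}$ and $(\rho'\circ\theta)^{-1}(T_{0})\setminus\Delta\simeq T_{0}\times\mathbb{A}^{1}$. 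Note that this also renders your final paragraph unnecessary---the cylinder is produced directly in $Y\setminus\Delta$ rather than in $Y'\setminus\Delta'$ and then pulled back. To repair your write-up you need precisely this uniruled-versus-non-uniruled isomorphism argument; without it the crux of your second paragraph remains open.
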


\begin{proof}
Since $Y_{s}$ is projective, the hypothesis that $(Z_{s},\varphi_{s})$
is maximally vertical in $Y_{s}\setminus\Delta_{s}$ implies that
the subset $\Delta_{0}$ of irreducible components of $\Delta$ which
are horizontal for $\overline{f}:Y\rightarrow S$ is not empty. Furthermore,
for a general closed point $s\in S$, $\Delta_{0,s}$ intersects the
closures in $Y_{s}$ of the general fibers of $\mathrm{pr}_{Z_{s}}\circ\varphi_{s}^{-1}:\varphi_{s}(Z_{s}\times\mathbb{A}^{1})\rightarrow Z_{s}$
in a unique place. Let $(Z_{s}',\varphi'_{s})$ be a sub-$\mathbb{A}^{1}$-cylinder
of $(Z_{s},\varphi_{s})$ with the property that $(Z_{s}',\theta_{s}\circ\varphi'_{s})$
is an $\mathbb{A}^{1}$-cylinder in $Y_{s}'$ and $\alpha_{s}:Z'_{s}\hookrightarrow T_{s}$
is an open embedding. Since the only divisors that could be contracted
by $\theta_{s}:Y_{s}\dashrightarrow Y'_{s}$ are uniruled hence do
not dominate $Z'_{s}$, we can assume up to shrinking $Z'_{s}$ further
if necessary that the restriction of $\theta_{s}^{-1}$ to ${\rho_{s}'}^{-1}(Z'{}_{s})$
is an isomorphism onto its image $V_{s}$ in $Y_{s}$. Consequently,
$\rho_{s}'\circ\theta_{s}|_{V_{s}}:V_{s}\rightarrow Z'{}_{s}$ is
a $\mathbb{P}^{1}$-fibration extending $\mathrm{pr}_{Z'_{s}}\circ{\varphi'_{s}}^{-1}:\varphi'_{s}(Z'{}_{s}\times\mathbb{A}^{1})\rightarrow Z'{}_{s}$.
Since $(Z_{s},\varphi_{s})$ is vertically maximal in $Y_{s}\setminus\Delta_{s}$,
so is $(Z'_{s},\varphi'_{s})$, and it follows that $\Delta_{0,s}\cap V_{s}$
is a section of $\rho_{s}'\circ\theta_{s}|_{V_{s}}:V_{s}\rightarrow Z'{}_{s}$.
This implies in turn that $\Delta_{0}$ is irreducible and that there
exists an open subset $T_{0}$ of $T$ such that $(\rho'\circ\theta)^{-1}(T_{0})\simeq T_{0}\times\mathbb{P}^{1}$
and $(\rho'\circ\theta)^{-1}(T_{0})\setminus\Delta\simeq T_{0}\times\mathbb{A}^{1}$.
So $Y\setminus\Delta$ is $\mathbb{A}^{1}$-cylindrical over $T_{0}$
whence over $S$. 
\end{proof}

\subsection{General case}

The case of a general morphism $f:X\rightarrow S$ between normal
algebraic varieties is now obtained as follows. By desingularization
theorems \cite{Hi64}, we can find a desingularization $\sigma:\tilde{X}\rightarrow X$
which restricts to an isomorphism over the regular locus $X_{\mathrm{reg}}$.
Since $X$ is normal, it follows in particular that the image of the
exceptional locus of $\sigma$ is a closed subset of $X$ of codimension
at least two. By Nagata completion theorems \cite{Na62} and desingularization
theorems again, there exists an open embedding $j:\tilde{X}\hookrightarrow\tilde{Y}$
into a smooth algebraic variety $\tilde{Y}$ proper over $S$. Then
by Chow lemma \cite[5.6.1]{EGAII} there exists a smooth algebraic
variety $Y$ projective over $S$, say $\overline{f}:Y\rightarrow S$,
and a birational morphism $\tau:Y\rightarrow\tilde{Y}$. Applying
desingularization again, we can further assume that the reduced total
transform of $\tilde{Y}\setminus j(\tilde{X})$ in $Y$ is an SNC
divisor $\Delta$. Since $\tilde{Y}$ is smooth, the image of the
exceptional locus of $\tau$ has codimension at least two in $\tilde{Y}$,
and so the image of the exceptional locus of $\beta=\sigma\circ\tau|_{\tau^{-1}(\tilde{X})}:\tau^{-1}(\tilde{X})\rightarrow X$
is a closed subset of codimension at least two in $X$. Summing up,
we get a sequence of birational maps of $S$-varieties \[\begin{tikzcd} X \arrow[r,dashed, "\sigma^{-1}"] \arrow[d,swap, "f"] \arrow[rrr, bend left=40, "\delta"] & \tilde{X} \arrow[r,hook, "j"] & \tilde{Y} \arrow[r,dashed, "\tau^{-1}"] & Y \arrow[d,"\overline{f}"] \\ S \arrow[rrr,equal] & & & S\end{tikzcd}\]which
we refer to as a \emph{good relative smooth projective completion}
of $f:X\rightarrow S$.
\begin{lem}
\label{lem:good-completion-A1ruled-fibers} Let $f:X\rightarrow S$
be a morphism between normal $k$-varieties and let $\delta:X\dashrightarrow Y$
be a good relative smooth projective completion of $f:X\rightarrow S$.
Suppose that for a general closed point $s\in S$, $X_{s}$ contains
an $\mathbb{A}^{1}$-cylinder $(Z_{s},\varphi_{s})$ over a $\kappa(s)$-variety
$Z_{s}$. Then for a general closed point $s$, there exists a dense
open subset $Z'_{s}$ of $Z_{s}$ such that $(Z'_{s},\delta_{s}\circ\varphi_{s})$
is an $\mathbb{A}^{1}$-cylinder in in $Y_{s}\setminus\Delta_{s}$.
Furthermore, if $(Z_{s},\varphi_{s})$ is vertically maximal in $X_{s}$
then $(Z'_{s},\delta_{s}\circ\varphi_{s})$ is vertically maximal
in $Y_{s}\setminus\Delta_{s}$. 
\end{lem}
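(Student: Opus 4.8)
The plan is to exploit that $\delta$ and its rational inverse $\beta$ are mutually inverse birational maps which are isomorphisms away from sets of codimension at least two, and then to transport this picture to a general closed fiber by generic flatness.

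First I would record the global geometry attached to a good relative smooth projective completion. By construction $\beta=\sigma\circ\tau|_{\tau^{-1}(\tilde X)}:\tau^{-1}(\tilde X)\to X$ is a proper birational $S$-morphism whose exceptional locus $\mathrm{Exc}(\beta)$ has image $\beta(\mathrm{Exc}(\beta))$ of codimension at least two in $X$, and since $\Delta$ is the reduced total transform of $\tilde Y\setminus j(\tilde X)$ we have $Y\setminus\Delta=\tau^{-1}(\tilde X)$, so that $\beta$ is in fact a proper morphism $\beta:Y\setminus\Delta\to X$. The map $\delta:X\dashrightarrow Y$ is the birational inverse of $\beta$. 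Setting $U=X\setminus\beta(\mathrm{Exc}(\beta))$, an open subset whose complement has codimension at least two, $\beta$ restricts to an isomorphism over $U$, so $\delta$ restricts to an open immersion $\delta|_U:U\hookrightarrow Y\setminus\Delta$ with $\beta\circ\delta|_U=\mathrm{id}_U$.

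Next I would descend this to a general fiber and build the cylinder. Since $\beta(\mathrm{Exc}(\beta))$ is closed of codimension at least two in $X$, generic flatness and constancy of fiber dimension over a dense open of $S$ give, for a general closed point $s$, that $\beta(\mathrm{Exc}(\beta))_s$ has codimension at least two in $X_s$ and that $(Y\setminus\Delta)_s=Y_s\setminus\Delta_s$. Base changing to $\kappa(s)$ then yields an open immersion $\delta_s:U_s\hookrightarrow Y_s\setminus\Delta_s$, where $U_s=X_s\setminus\beta(\mathrm{Exc}(\beta))_s$ has complement of codimension at least two, and a morphism $\beta_s:Y_s\setminus\Delta_s\to X_s$ with $\beta_s\circ\delta_s=\mathrm{id}_{U_s}$. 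Because $\varphi_s$ is an open immersion, $\varphi_s^{-1}(\beta(\mathrm{Exc}(\beta))_s)$ has codimension at least two in $Z_s\times\mathbb{A}^1$, so its image under $\mathrm{pr}_{Z_s}$ is a proper closed subset of $Z_s$; letting $Z'_s$ be its complement, a dense open subset of $Z_s$, we obtain $\varphi_s(Z'_s\times\mathbb{A}^1)\subset U_s$, whence $(Z'_s,\delta_s\circ\varphi_s)$ is an $\mathbb{A}^1$-cylinder in $Y_s\setminus\Delta_s$.

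For vertical maximality I would argue by contradiction. The generic points of $Z'_s$ coincide with those of $Z_s$; fix one, say $\xi$, and suppose the induced open embedding $\xi\times\mathbb{A}^1\hookrightarrow Y_s\setminus\Delta_s$ extends to a morphism $g:\xi\times\mathbb{P}^1\to Y_s\setminus\Delta_s$. Composing with $\beta_s:Y_s\setminus\Delta_s\to X_s$ produces $\beta_s\circ g:\xi\times\mathbb{P}^1\to X_s$. On $\xi\times\mathbb{A}^1$ the map $g$ agrees with $\delta_s\circ\varphi_s$, and $\varphi_s(\xi\times\mathbb{A}^1)\subset U_s$, so $\beta_s\circ\delta_s\circ\varphi_s=\varphi_s$ there; hence $\beta_s\circ g$ extends $\varphi_s|_{\xi\times\mathbb{A}^1}:\xi\times\mathbb{A}^1\hookrightarrow X_s$ to a morphism on $\xi\times\mathbb{P}^1$, contradicting the vertical maximality of $(Z_s,\varphi_s)$ in $X_s$. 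I expect the main obstacle to be precisely the uniform control hidden in the phrase ``for a general closed point $s$'': one must ensure via generic flatness over $S$ and semicontinuity of fiber dimension that the codimension bounds and the definedness of $\beta$ and $\delta_s$ as (open im)mersions all persist on the general fiber, and that $\Delta_s$ is genuinely the fiber of $\Delta$; once this is secured, the remaining steps are formal manipulations of the mutually inverse birational maps $\delta$ and $\beta$.
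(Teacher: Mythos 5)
Your proposal is correct and takes essentially the same approach as the paper: pull the fiberwise cylinder back along the proper birational morphism $\beta_s:(Y\setminus\Delta)_s\rightarrow X_s$ after shrinking its base to avoid the codimension-two exceptional image, and, for vertical maximality, compose a hypothetical extension $\xi\times\mathbb{P}^{1}\rightarrow Y_s\setminus\Delta_s$ with $\beta_s$ to contradict vertical maximality of $(Z_s,\varphi_s)$ in $X_s$. The only cosmetic difference is that the paper obtains the fiberwise codimension-two bound by invoking Lemma \ref{lem:Cylinder-birational-pullback}~b) together with normality of the general fiber $X_s$, and compresses your last paragraph into ``the second one is clear from the definition of $\Delta$'', whereas you re-derive the needed bound by spreading the global bound on $\beta(\mathrm{Exc}(\beta))$ to general fibers via a dimension count over a dense open subset of $S$; both justifications are sound.
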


\begin{proof}
Since $X$ is normal, for a general closed point $s\in S$, $X_{s}$
is a normal variety. The morphism $(\sigma\circ\tau)_{s}:\tau^{-1}(j(\tilde{X}))_{s}\rightarrow X_{s}$
being proper and birational by construction, the first assertion follows
from Lemma \ref{lem:Cylinder-birational-pullback} b). The second
one is clear from the definition of $\Delta$. 
\end{proof}
The following proposition combined with Proposition \ref{prop:Relative-MMP},
Lemma \ref{lem:MMP-Cylinder} Lemma \ref{lem:good-completion-A1ruled-fibers}
completes the proof of Theorem \ref{thm:MainThm}. 
\begin{prop}
\label{lem:Descending-cylinders-from-completion} Let $f:X\rightarrow S$
be a morphism between normal $k$-varieties and let $\delta:X\dashrightarrow Y$
be a good relative smooth projective completion of $f:X\rightarrow S$.
Suppose that for a general closed point $s\in S$, $X_{s}$ contains
a vertically maximal $\mathbb{A}^{1}$-cylinder $(Z_{s},\varphi_{s})$
over a non uniruled $\kappa(s)$-variety $Z_{s}$. If $Y\setminus\Delta$
is $\mathbb{A}^{1}$-cylindrical over $S$ then so is $X$.
\end{prop}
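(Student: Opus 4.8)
The goal is to descend an $\mathbb{A}^1$-cylinder from the good relative smooth projective completion $Y\setminus\Delta$ back to the original variety $X$. Recall that $\delta:X\dashrightarrow Y$ factors as $X\stackrel{\sigma^{-1}}{\dashrightarrow}\tilde{X}\stackrel{j}{\hookrightarrow}\tilde{Y}\stackrel{\tau^{-1}}{\dashrightarrow}Y$, and the key structural fact established during the construction is that the composite $\beta=\sigma\circ\tau|_{\tau^{-1}(\tilde{X})}:\tau^{-1}(\tilde{X})\to X$ is a proper birational morphism whose exceptional locus maps to a closed subset of codimension at least two in $X$. The plan is to view $\delta^{-1}:Y\setminus\Delta\dashrightarrow X$ as a birational map whose restriction to the open subset $\tau^{-1}(j(\tilde{X}))\setminus\Delta \simeq\tau^{-1}(\tilde{X})\setminus(\text{codim}\,2)$ is exactly $\beta$, and to apply the cylinder-transfer machinery of Lemma \ref{lem:Cylinder-birational-pullback}.

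**The main reduction.**

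First I would note that $Y\setminus\Delta$ being $\mathbb{A}^1$-cylindrical over $S$ gives a cylinder $(Z,\varphi)$ with $\varphi:Z\times\mathbb{A}^1\hookrightarrow Y\setminus\Delta$ over $S$. The subtle point is that a priori the base $Z$ of this global cylinder need not be non-uniruled, so Lemma \ref{lem:Cylinder-birational-pullback} c) does not apply directly. This is where the hypotheses on the fibers must be brought in: for a general closed point $s\in S$, the fiber $X_s$ carries a \emph{vertically maximal} cylinder over a non-uniruled base $Z_s$, and by Proposition \ref{prop:UniqueCylinder-vertMax} every $\mathbb{A}^1$-cylinder in $X_s$ is equivalent to it. The plan is therefore to restrict the global cylinder $(Z,\varphi)$ to a general fiber: its restriction to $X_s$, transported by $\beta_s^{-1}$ across the codimension-two modification (Lemma \ref{lem:Cylinder-birational-pullback} a)/b)), yields a cylinder in $X_s$, which by uniqueness is equivalent to $(Z_s,\varphi_s)$ and in particular has a \emph{non-uniruled} base. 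Consequently the generic fiber of the projection $Z\to S$ is non-uniruled, and one checks that $Z$ itself is then non-uniruled.

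**Transferring the cylinder via codimension-two modification.**

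With non-uniruledness of $Z$ in hand, the remaining work is to push $(Z,\varphi)$ through $\delta^{-1}$ onto $X$. Since $\beta:\tau^{-1}(\tilde{X})\to X$ is proper birational with exceptional image of codimension $\geq 2$ in the normal variety $X$, I would first apply Lemma \ref{lem:Cylinder-birational-pullback} b) to the proper morphism $\tau$ (whose exceptional image lies in $\tilde{Y}\setminus j(\tilde{X})$, hence inside $\Delta$) to obtain a sub-cylinder landing inside $\tau^{-1}(j(\tilde{X}))\setminus\Delta$. Identifying $\tau^{-1}(j(\tilde{X}))\setminus\Delta$ with an open subset of $\tilde{X}$, and then with an open subset of $X$ via $\sigma$ up to the codimension-two exceptional set, I would invoke Lemma \ref{lem:Cylinder-birational-pullback} a) for the isomorphism-in-codimension-one part: shrinking the base $Z$ so that the pullback of the codimension-two locus fails to dominate $Z$ produces a sub-cylinder whose image avoids it, and the composition $(Z',\beta\circ\varphi')=(Z',\delta^{-1}\circ\varphi')$ is the desired $\mathbb{A}^1$-cylinder in $X$ over $S$.

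**The main obstacle.**

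The hard part is establishing the non-uniruledness of the base $Z$ of the \emph{global} cylinder, since that is the hypothesis needed to move the cylinder across a proper (rather than merely isomorphic-in-codimension-one) birational modification. This cannot be read off directly from the fiberwise data; it requires the uniqueness statement of Proposition \ref{prop:UniqueCylinder-vertMax} to force the fiberwise restrictions of the global cylinder to agree with the prescribed non-uniruled ones, together with an argument that non-uniruledness of the general fibers of $Z\to S$ propagates to $Z$. Once $Z$ is known to be non-uniruled, everything else is a routine bookkeeping application of Lemma \ref{lem:Cylinder-birational-pullback}, exploiting that every birational modification in $\delta$ has exceptional locus either contained in $\Delta$ or of codimension at least two in the normal total space $X$.
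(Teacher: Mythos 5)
There is a genuine gap, and it lies in the transfer machinery you rely on. The fact that the image of $\mathrm{Exc}(\beta)$ has codimension at least two in $X$ does \emph{not} make $\beta=\sigma\circ\tau|_{\tau^{-1}(\tilde{X})}$ an isomorphism in codimension one: $\beta$ typically contracts divisors to small centers, namely the exceptional divisors of the resolution $\sigma:\tilde{X}\rightarrow X$ and of $\tau$, so Lemma \ref{lem:Cylinder-birational-pullback} a) is unavailable. Part b) transports cylinders from the \emph{target} of a proper morphism to its \emph{source}, the wrong direction for your purpose (your auxiliary claim that $\mathrm{Exc}(\tau)$ lies over $\tilde{Y}\setminus j(\tilde{X})$, hence that $\tau^{-1}(j(\tilde{X}))\setminus\Delta$ can be identified with an open subset of $\tilde{X}$, is also unjustified: $\tau$ may contract divisors over $j(\tilde{X})$, only the image of its exceptional locus has codimension $\geq2$). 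And part c), the only push-forward statement, requires every divisorial component of $\mathrm{Exc}(\beta)$ to be uniruled; you correctly noticed that c) needs a non-uniruled base, but missed that this other hypothesis can fail too, since $X$ is merely normal and exceptional divisors of a resolution need not be uniruled --- the cone over an elliptic curve, the very example in the paper's remark after Lemma \ref{lem:Factorization-Lemma}, is the standard counterexample. The same circularity infects your fiberwise step: to regard the restriction $\psi_{s}$ of the global cylinder as a cylinder in $X_{s}$ ``transported by $\beta_{s}^{-1}$ (Lemma a)/b))'' you must already know that, generically over its base, it avoids $\mathrm{Exc}(\beta_{s})$ --- and that avoidance is essentially the whole content of the proposition. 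Finally, your bridging claim that non-uniruledness of the general fibers of $Z\rightarrow S$ forces $Z$ itself to be non-uniruled is false: for $Z=E\times\mathbb{A}^{1}\rightarrow S=\mathbb{A}^{1}$ with $E$ an elliptic curve, every fiber is non-uniruled while $Z$ is uniruled.

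The paper's proof uses your key input, Proposition \ref{prop:UniqueCylinder-vertMax}, but for a different purpose and in the opposite direction, and it never needs global non-uniruledness nor Lemma \ref{lem:Cylinder-birational-pullback} c). It suffices to show that $\mathrm{pr}_{T}$ restricted to $\psi^{-1}(\mathrm{Exc}(\beta))$ is not dominant, for then a sub-cylinder lands in $Y\setminus(\mathrm{Exc}(\beta)\cup\Delta)\simeq\delta(X\setminus\beta(\mathrm{Exc}(\beta)))$ and maps isomorphically into $X$. Arguing by contradiction with a component $E$ of $\mathrm{Exc}(\beta)$ dominating $T$, one transports the \emph{prescribed} fiberwise cylinder $(Z_{s},\varphi_{s})$ \emph{upwards} into $\tau_{s}^{-1}(\tilde{X}_{s})$ --- legitimate because $\beta_{s}(\mathrm{Exc}(\beta_{s}))$ has codimension $\geq2$ in $X_{s}$, so a sub-cylinder avoids it --- observes that it remains vertically maximal with non-uniruled base there, and applies Proposition \ref{prop:UniqueCylinder-vertMax} \emph{upstairs}: the resulting equivalence of $(T_{s},\psi_{s})$ with the transported cylinder contradicts the fact that $\psi_{s}^{-1}(E)$ dominates $T_{s}$ while the transported cylinder misses $E$. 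So your diagnosis that fiberwise uniqueness is the crux is right, but its correct role is to force the global cylinder to avoid $\mathrm{Exc}(\beta)$, not to establish non-uniruledness of the global base followed by a push-forward that the lemmas cannot deliver.
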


\begin{proof}
Let $\psi:T\times\mathbb{A}^{1}\hookrightarrow Y\setminus\Delta$
be an $\mathbb{A}^{1}$-cylinder in $Y$ over $S$. It is enough to
show that the restriction of $\mathrm{pr}_{T}$ to the inverse image
by $\psi$ of the exceptional locus $\mathrm{Exc}(\beta)$ of $\beta=\sigma\circ\tau|_{\tau^{-1}(\tilde{X})}:\tau^{-1}(\tilde{X})\rightarrow X$
is not dominant. Indeed, if so, there exists an open subset $T_{0}$
of $T$ such that $\psi(T_{0}\times\mathbb{A}^{1})$ is contained
in $Y\setminus\mathrm{Exc}(\beta)\cup\Delta\simeq\delta(X\setminus\beta(\mathrm{Exc}(\beta)))$.
So suppose on the contrary that there exists an irreducible component
$E$ of $\mathrm{Exc}(\beta)$ such that $\mathrm{pr}_{T}|_{\psi^{-1}(E)}$
is dominant. For a general closed point $s\in S$, the fiber $Y_{s}$
is smooth and the restriction $\beta_{s}:\tau_{s}^{-1}(\tilde{X}_{s})\rightarrow X_{s}$
is an isomorphism outside a closed subset of codimension at least
two in $X_{s}$. So there exists a dense open subset $Z_{s}'$ of
$Z_{s}$ such that $(Z'_{s},\varphi'_{s}=\beta_{s}^{-1}\circ\varphi_{s}|_{Z'_{s}\times\mathbb{A}^{1}})$
is $\mathbb{A}^{1}$-cylinder in $\tau_{s}^{-1}(\tilde{X}_{s})$.
Since $(Z_{s},\varphi_{s})$ is a vertically maximal $\mathbb{A}^{1}$-cylinder
in $X_{s}$, $(Z_{s}',\varphi'_{s})$ is vertically maximal in $\tau_{s}^{-1}(\tilde{X}_{s})$.
On the other hand, for a general closed point $s\in S$, the restriction
$\psi_{s}:T_{s}\times\mathbb{A}^{1}\rightarrow\tau_{s}^{-1}(\tilde{X}_{s})$
is also an embedding. Since $Z_{s}$ whence $Z_{s}'$ is not uniruled,
it follows from Proposition \ref{prop:UniqueCylinder-vertMax} that
$(Z_{s}',\varphi'_{s})$ and $(T_{s},\psi_{s})$ are equivalent $\mathbb{A}^{1}$-cylinders
in $\tau_{s}^{-1}(\tilde{X}_{s})$. But then the restriction of $\mathrm{pr}_{Z_{s}'}$
to ${\varphi_{s}'}^{-1}(E)$ would be dominant, implying in turn that
$(Z_{s}',\varphi_{s}')$ is a not a cylinder, a contradiction. 
\end{proof}
\bibliographystyle{amsplain}

\end{document}